\def\<{\langle}
\def\>{\rangle}
\def\HH{{\mathcal H}}
\def\MM{{\mathcal M}}
\def\bbH{{\mathcal Q}}
\def\TT{{\mathcal T}}
\def\bbR{\bbR}
\def\bbC{\mathbb{C}}
\def\bbH{\mathbb{H}}
\def\1{\mathbf{1}}
\def\bf1{\mathbf{1}}
\def\bfi{\mathbf{i}}
\def\bfj{\mathbf{j}}
\def\bfk{\mathbf{k}}
\def\bbR{\mathbb{R}}
\def\bbC{\mathbb{C}}
\def\bbH{\mathbb{H}}
\theoremstyle{plain}
\newtheorem{thm}{Theorem}[section]
\newtheorem{lem}[thm]{Lemma}
\newtheorem{prop}[thm]{Proposition}
\newtheorem{example}[thm]{Example}
\theoremstyle{definition}
\newtheorem{defn}{Definition}[section]
\theoremstyle{remark}
\numberwithin{equation}{section}
\title{Quaternion Toeplitz matrices and their fundamental properties}
\author{Muhammad Ahsan Khan}
\address{Department of Mathematics, University of Kotli Azad Jammu \& Kashmir, Kotli, Azad Jammu \&
	Kashmir 11100, Pakistan}
\email{ahsankhan388@hotmail.com}
\author{Sohail Khan}
\address{Department of Mathematics, University of Kotli Azad Jammu \& Kashmir, Kotli, Azad Jammu \&
	Kashmir 11100, Pakistan}
\email{sohailkhanak56@gmail.com}
\subjclass{15B05, 15B33, 15B99}
\keywords{Quaternion, Toeplitz matrices, normal matrices}
\begin{document}
	\begin{abstract}
Toeplitz matrices are characterized by their constant diagonals, have been extensively studied in various settings, including over real and complex numbers. However, their study over quaternion is quite sparse. In this paper, we investigate the structure and the algebraic properties of quaternion Toeplitz matrices. Most importantly, we established a complete characterization of all normal  Toeplitz matrices having entries commutative quaternions.  
	\end{abstract}
	\maketitle

	\section{Introduction}
	
A kind of square matrix in which every diagonal that descends from left to right is constant is called a Toeplitz matrix. For instance, in a $n\times n$ matrix, if the first row is $(p_0\quad  p_{-1} \cdots p_{1-n})$ and the first column is $\begin{pmatrix}
	p_0\\
	p_{1}\\
	\vdots\\
	p_{n-1}
\end{pmatrix}
$, then the Toeplitz matrix consists of $n^2$ entries and will look like as:
\[T=
\begin{pmatrix}
	p_0& p_{-1}& p_{-2} & \cdots &p_{1-n}\\
	p_1& p_0 & p_{-1} & \cdots &p_{2-n}\\
	p_{2}& p_{1} & p_{0} & \cdots &p_{3-n}\\
	\vdots & \vdots & \vdots &\ddots & \vdots\\
	p_{n-1} & p_{n-2}& p_{n-3} & \cdots &p_{0}
\end{pmatrix}.
\]
These matrices are useful in many areas of mathematics and practical disciplines, like integral equations, time series analysis, and queuing theory, see \cite{Heinig2001, Mackey1999,Williams2020,Grenander1958, Macklin1984, Nikolski2020,Widom1965,Khan2018, Khan2023, Khan2021}.
In addition to that their algebraic and analytical theory is a vital part of modern analysis and algebra and there exists a wide literature concerning Toeplitz matrices having elements,  from the algebra of complex numbers. The most important references concerning these matrices are \cite{Ye2016,Nikolski2020,Widom1965, Shalom1987, brown1964}  and \cite{Khan2018, Khan2023, Khan2021,  Khan20221, Khan2022, yagoub2024algebrastoeplitzmatricesquaternion}. They studied the general form of complex (block) Toeplitz matrices, which also include applications to probability theory, statistics, and image restoration. A comprehensive and excellent overview of the uses of complex Toeplitz matrices in various areas of pure and applied mathematics is also available in \cite{Widom1965}.

Whereas, quaternions are a complicated system of hypercomplex numbers that expand besides the notion of complex numbers. Rather than two components like a complex number $p_0+p_1\bfi$, quaternions have four components $p_0+p_1\bfi+p_2\bfj+p_3\bfk$ where $\bfi,\bfj$ and $\bfk$ are imaginary units adhering to the following product rules:
$$
\bfi^2=\bfj^2=\bfk^2=\bfi\bfj\bfk=-1.
$$

These numbers are important in algebra because they offer a fundamental example of a division algebra over the real numbers that is not commutative. These were initially introduced by Sir William Rowan Hamilton in 1843 \cite{Hamilton1969, Hamilton1953}, extended the idea of complex numbers into a non commutative four-dimensional setting. The study of other kinds of noncommutative rings and algebras, which are now important in complex algebraic fields like representation theory and ring theory, were made possible by the noncommutative structure of quaternions.

The reality that quaternions constitute a division algebra is regarded as one of their most essential properties. This shows that every nonzero quaternion is non-singular concerning the product, which is quite rare in higher dimensional algebras, just as real and complex numbers algebras do. 

Following Hamilton's discovery of quaternions, Segre suggested commutative quaternions to provide the commutative property in products \cite{Segre1892}. It is possible to split down commutative quaternions into two complex variables \cite{Kosal2014, Kosal2015, Catoni2005, Catoni12005}. Like quaternions, the collection of commutative quaternions is $4$-dimensional. However, this collection includes isotropic and zero divisor elements \cite{Kosal2014, Kosal2015}. It has been observed that commutative quaternion matrices lack an appropriate theory. For a complete study on commutative quaternions and their matrices, see \cite{Segre1892, Kosal2014, Kosal2015, Catoni2005, Catoni12005} and further references therein. Using complex representations of commutative quaternion matrices, the authors of \cite{Kosal2014, Kosal2015} examined various algebraic features of commutative quaternion matrices. We refer the reader to the lectures of \cite{Zhang1997, kleyn2014,Visick2000, Moenck1977OnCC, Altun2021, Gongopadhyay2012}, for detailed study of quaternions and their matrices. Several writers have provided studies on quaternion matrix norms; for example, \cite{Visick2000, Moenck1977OnCC, Mathias1990} and more references referenced therein. 

Classical Toeplitz matrices are generalized to Toeplitz matrices with quaternion entries, in which every entry is a quaternion instead of a commutative scalar. The non commutative aspect of quaternion product creates intrinsic difficulties and applies when the entries are quaternions, affecting the matrix's structure and properties. It would be reasonable to mention here that quaternion Toeplitz matrices have been studied very little from an algebraic point of view. The authors of \cite{yagoub2024algebrastoeplitzmatricesquaternion} have extracted some maximal left algebras and certain algebraic properties as well, but the theory is not developed to the extent that the theory of complex Toeplitz matrices is. The main task of the current paper is to obtain basic algebraic results and to generalize some of the main results of \cite{Gu2003, Shalom1987}, concerning Toeplitz matrices over quaternions and commutative quaternions.
The plan of the paper is as : After the introductory section, we will study quaternions, commutative quaternions, their matrices and the basic properties concerning them. In the third section, we will introduce quaternion Toeplitz matrices and prove several fundamental results related to them. The final chapter is particularly important, as it deals with the complete classification of normal quaternion Toeplitz matrices whose entries are commutative quaternions.

\section{Quaternions, Commutative Quaternions, and Their matrices: Basic Properties}

This section deals with the introduction of quaternions and their arithmetic: product, conjugate, norm, etc. Additionally, we offer quaternion representations as complex $2\times 2$ matrices and real $4\times 4$ matrices. We begin with the following basic definitions. 
\subsection{Quaternions and Their Matrices}
Set a structured basis multiplication in $\bbH$ using the following formulas: $\{\bf1,\bfi,\bfj,\bfk\}$ in a four-dimensional real vector space $\bbH$ (one can pick $\bbH=\bbR^4$, the vector space of rows or columns composed of four real components).

$$
\bf1\bfi=\bfi\bf1=\bfi,\quad  \bf1\bfj=\bfj\bf1=\bfj, \quad \bf1\bfk=\bfk\bf1=\bfk,
$$

$$
\bfi^2=\bfj^2=\bfk^2=ijk=-1, \quad \bfi\bfj=-\bfj\bfi=\bfk,\quad \bfj\bfk=-\bfk\bfj=\bfi,
\quad 
\bfk\bfi =-\bfi\bfk=\bfj,
$$ and by the condition that the multiplication of $\bbH$'s elements commutes with scalar multiplication and is distributive with regard to addition:
$$p(q+r) =pq+pr, \quad  (q+r)p=qp+rp,\quad  p(\alpha q)=\alpha(pq)$$ for each $p,q,r$ in $\bbH$ and $\alpha\in\bbR.$

Notably, the product of any two basis vectors in $\bbH$ is $\pm$ another basis vector, the set $\{\pm\bf1, \pm\bfi, \pm\bfj, \pm\bfk\}$, forms a non-abelian group under multiplication, known as the quaternion group and typically represented by the symbol $Q_8$.
The (real) quaternions are the elements of $\bbH$ which possess the algebraic operations of $\bbH$ as a real vector space, along with multiplication defined as previously described. Obviously the multiplication in $\bbH$ is non commutative.
\begin{prop}\cite{rodman}
	For any $p,q,r\in\bbH$, $\bbH$ is an algebra with the identity $\bf1$: $p(qr)=(pq)r,$ $\bf1 p=p\bf1=p$
	In future work, we use the quaternion $\alpha\bf1;$ to determine the real number $\alpha$. When it is convenient, we identify $\bbC$ with the subalgebra of $\bbH$ spanned by $\bf1$ and $\bfi.$ Since it is evident that the $span\{\bf1, \bfi\}$ over $\bbR$ is isomorphic to $\bbC$.
\end{prop}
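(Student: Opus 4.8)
The plan is to reduce both assertions to a finite verification on the basis $\{\mathbf1,\bfi,\bfj,\bfk\}$ and then exploit the symmetry of the multiplication table. By construction the product on $\bbH$ is $\bbR$-bilinear — distributive over addition on either side, with real scalars commuting through — so for fixed choices of the remaining arguments each of the expressions $\mathbf1 p-p$, $p\mathbf1-p$ and $p(qr)-(pq)r$ is $\bbR$-linear in each slot separately; hence each vanishes identically as soon as it vanishes on basis vectors.

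First I would settle the unit law. For a basis vector $e\in\{\mathbf1,\bfi,\bfj,\bfk\}$ the defining relations $\mathbf1\bfi=\bfi\mathbf1=\bfi$, $\mathbf1\bfj=\bfj\mathbf1=\bfj$, $\mathbf1\bfk=\bfk\mathbf1=\bfk$, together with $\mathbf1\mathbf1=\mathbf1$, give $\mathbf1 e=e\mathbf1=e$; bilinearity upgrades this to $\mathbf1 p=p\mathbf1=p$ for every $p\in\bbH$. This simultaneously disposes of associativity for every triple one of whose entries is $\mathbf1$, since $(\mathbf1 q)r=qr=\mathbf1(qr)$, $(p\mathbf1)r=pr=p(\mathbf1 r)$ and $(pq)\mathbf1=pq=p(q\mathbf1)$.

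There remains the identity $e_a(e_be_c)=(e_ae_b)e_c$ for $e_a,e_b,e_c\in\{\bfi,\bfj,\bfk\}$ — nominally $27$ cases, which collapse under the symmetries of the table. If $e_a=e_b$ then both sides equal $-e_c$ because $e_a^2=-\mathbf1$, and symmetrically if $e_b=e_c$ both sides equal $-e_a$. For the pairwise-distinct triples the cyclic substitution $\bfi\to\bfj\to\bfk\to\bfi$ is an algebra automorphism of $\bbH$ (it is $\bbR$-linear and one checks it respects products on the basis), so it carries a valid instance of the identity to another valid instance; since it acts with two orbits on these triples, only two cases need an explicit check: $\bfi(\bfj\bfk)=\bfi\bfi=-\mathbf1=\bfk\bfk=(\bfi\bfj)\bfk$ and $\bfi(\bfk\bfj)=\bfi(-\bfi)=\mathbf1=(-\bfj)\bfj=(\bfi\bfk)\bfj$. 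That exhausts all cases, and in particular shows $\bfi\bfj\bfk=-\mathbf1$ is unambiguous. For the closing remark it suffices to note that $\mathrm{span}_{\bbR}\{\mathbf1,\bfi\}$ is closed under multiplication (since $\bfi^2=-\mathbf1$) and that $a\mathbf1+b\bfi\mapsto a+b\sqrt{-1}$ is an $\bbR$-algebra isomorphism onto $\bbC$.

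An alternative to the $27$-case step, which I would probably present instead, is to check that the $\bbR$-linear map defined on the basis by
\[
\mathbf1\mapsto\begin{pmatrix}1&0\\0&1\end{pmatrix},\qquad
\bfi\mapsto\begin{pmatrix}\sqrt{-1}&0\\0&-\sqrt{-1}\end{pmatrix},\qquad
\bfj\mapsto\begin{pmatrix}0&1\\-1&0\end{pmatrix},\qquad
\bfk\mapsto\begin{pmatrix}0&\sqrt{-1}\\\sqrt{-1}&0\end{pmatrix}
\]
is an injective algebra homomorphism of $\bbH$ into $\bbC^{2\times 2}$: one verifies the sixteen basis products, after which associativity and the unit law are inherited from matrix multiplication. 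Either way the argument is elementary, and the only genuine pitfall is sign bookkeeping — the table must be used exactly as stated (so that $\bfi\bfj=\bfk$, not $-\bfk$), and in the matrix version one must confirm that the chosen matrices reproduce $\bfi\bfj=\bfk$ and $\bfi^2=-\mathbf1$ in the stated orientation rather than its mirror image. Organising the case check around the cyclic symmetry above is what keeps this from being an error-prone brute force.
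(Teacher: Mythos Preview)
The paper does not supply its own proof of this proposition; it is quoted from Rodman's book and stated without argument, so there is nothing in the paper to compare against. Your write-up is therefore not redundant, and your matrix-embedding approach (checking that the four $2\times 2$ matrices satisfy the basis relations, then inheriting associativity and the unit from $\bbC^{2\times 2}$) is correct and is in fact exactly the map $\chi$ that the paper introduces a few lines later.

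One genuine oversight in your direct case analysis: after disposing of the cases $e_a=e_b$ and $e_b=e_c$ you pass immediately to ``pairwise-distinct triples'', but this skips the six triples with $e_a=e_c\neq e_b$, e.g.\ $(\bfi\bfj)\bfi$ versus $\bfi(\bfj\bfi)$. These are not covered by either of your equal-neighbour reductions, and the cyclic automorphism argument as you state it (two orbits on the $3!=6$ pairwise-distinct triples) does not reach them either. The fix is easy --- the cyclic shift has two further orbits on those six triples, so two more one-line checks such as $(\bfi\bfj)\bfi=\bfk\bfi=\bfj$ and $\bfi(\bfj\bfi)=\bfi(-\bfk)=\bfj$ suffice --- but as written the count of $27$ cases does not close. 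Since you say you would actually present the matrix argument, this is a minor bookkeeping slip rather than a structural problem.
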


\begin{defn}\cite{rodman}
	If $p=p_0+p_1\bfi+p_2\bfj+p_3\bfk,$ We define $\Re(p)=p_0$, the real of $p$. The vector portion (or imaginary part) of $p$ is $\Im(p)=p_1\bfi+p_2\bfj+p_3\bfk,$ $p_0-p_1\bfi-p_2\bfj-p_3\bfk=\Re(p)-\Im(p)$ defines the conjugate of $p$, which is represented by the symbol $\overline{p}$. $\|p\|=\sqrt{\overline{p} p}=\sqrt{p_0^2+p_1^2+p_2^2+p_3^2}\in\bbR$ is the norm of $p$. If $\|p\|=1$, then $p\in\bbH$ is a unit quaternion.
\end{defn}
The following conclusion gathers some of the main properties of quaternions, related to norm, conjugate, and inverse. These properties be crucial in understanding the algebraic structure and geometric interpretation of quaternions.
\begin{prop}\label{basicquaternion}\cite{Zhang1997, rodman}
	Suppose that $p,q\in\bbH.$ Then:
	\begin{itemize}
		\item [(i)] $\overline{p}p=p\overline{p}$; \\
		\item [(ii)]  Each $p\in\bbH\backslash\{0\}$  has an inverse. More specifically, $p(\overline{p}/\|p\|^2)=1;$\\ $p^{-1}=\overline{p}/\|p\|^2\in\bbH;$
		\item [(iii)]$\|p\|=\|\overline{p}\|;$\\ 
		\item [(iv)] In fact, $\|.\|$ is a norm on $\bbH$; to be more specific, for any $p,q\in\bbH$ one has : \\
		$\|p\|\geq 0$ with equality, provided that
		if $p=0;$ $\|p+q\|\leq\|p\|+\|q\|,\quad  \|pq\|=\|qp\|=\|p\|\|q\|$;\\
		
		\item [(v)] $\bfj \alpha \overline{\bfj}=\bfk\alpha \overline{\bfk}=\overline{\alpha}$ for every $\alpha\in\bbC$;\\
		\item [(vi)] $\overline{pq}=\bar{q}\bar{p}$;\\
		\item [(vii)] $p=\overline{p}\iff p\in\bbR;$\\
		\item [(viii)] if $p\in\bbH,$ then $pq=qp$ for every  $q\in\bbH \iff p\in\bbR;$\\
		\item [(ix)] $p$ and $\overline{p}$ are solutions of the following qudratic equations with real coefficients ; $t^2-2\Re(p)t+\|p\|^2=0;$\\
		
		\item [(x)] The inequality of the Cauchy-Schwarz type is $\max\Big\{|\Re(pq)|,|\Im(pq)|\Big\}\leq\|p\|\|q\|;$
	\end{itemize}
\end{prop}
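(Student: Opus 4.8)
The plan is to dispatch the ten assertions by direct computation in the basis $\{\bf1,\bfi,\bfj,\bfk\}$ together with a few reductions among them; the only statement with any real content is the multiplicativity of the norm in (iv), and even that drops out cleanly once (i) and (vi) are available.

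I would begin with (i), by brute expansion: writing $p=p_0+p_1\bfi+p_2\bfj+p_3\bfk$ and multiplying out $\overline{p}\,p$ and $p\,\overline{p}$ with the $Q_8$ product table, every mixed term such as $p_0p_1\bfi$ occurs with both signs and cancels, while the surviving diagonal terms yield $p_0^2+p_1^2+p_2^2+p_3^2$ in either order. This already shows $\overline{p}p=p\overline{p}=\|p\|^2$, whence (ii) is immediate: if $p\neq0$ then $\|p\|^2>0$, so $\overline{p}/\|p\|^2\in\bbH$ is a two-sided inverse of $p$. Statements (iii) and (vii) are read straight off the definition of the conjugate (for (vii), $p=\overline p$ forces $p_1=p_2=p_3=0$), and the nonnegativity, the equality case, and the triangle inequality in (iv) are simply the corresponding facts for the Euclidean norm on $\bbR^4\cong\bbH$.

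Next I would prove (vi), $\overline{pq}=\overline q\,\overline p$. Rather than expand all sixteen products, note that both sides are real-bilinear in $(p,q)$, so it suffices to verify the identity on basis elements, where it reduces to the antisymmetry relations $\bfi\bfj=-\bfj\bfi$, $\bfj\bfk=-\bfk\bfj$, $\bfk\bfi=-\bfi\bfk$ (the cases involving $\bf1$ being trivial). With (i) and (vi) in hand, multiplicativity follows at once: $\|pq\|^2=\overline{pq}\,pq=\overline q\,\overline p\,p\,q=\overline q\,\|p\|^2\,q=\|p\|^2\,\overline q q=\|p\|^2\|q\|^2$, and since this is symmetric in $p$ and $q$ we also get $\|qp\|=\|p\|\|q\|$, finishing (iv). For (v), a short direct computation suffices: with $\alpha=a+b\bfi$ one has $\bfj\alpha\overline{\bfj}=-\bfj(a+b\bfi)\bfj=-(a\bfj^2+b\,\bfj\bfi\bfj)=a-b\bfi=\overline\alpha$, using $\bfj\bfi\bfj=\bfi$, and the computation for $\bfk$ is identical.

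For (viii), the nontrivial direction follows by testing a central $p$ against the generators: $p\bfi=\bfi p$ forces the $\bfj$- and $\bfk$-components of $p$ to vanish, and $p\bfj=\bfj p$ forces the $\bfi$-component to vanish, leaving $p=p_0\in\bbR$; the converse is clear from the distributive law. For (ix) I would write $p=p_0+u$ with $u=\Im(p)$ purely imaginary, note that $u^2=-\|u\|^2$ (a special case of (i), since $\overline u=-u$), and substitute into $t^2-2\Re(p)t+\|p\|^2$ to see it vanishes at $t=p$; as $\Re(\overline p)=\Re(p)$ and $\|\overline p\|=\|p\|$, the same real polynomial annihilates $\overline p$. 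Finally, (x) follows by decomposing $pq$ into its orthogonal real and imaginary parts, so that $\|pq\|^2=\Re(pq)^2+\|\Im(pq)\|^2$; combined with $\|pq\|=\|p\|\|q\|$ this bounds both $|\Re(pq)|$ and $|\Im(pq)|$ by $\|p\|\|q\|$. None of these steps is a genuine obstacle---the work is confined to finitely many basis computations---but the place to be careful is the sign bookkeeping in (i) and (v).
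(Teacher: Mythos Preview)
Your argument is correct in every part; the reductions you set up (deriving (ii) from (i), multiplicativity in (iv) from (i) and (vi), and (x) from (iv)) are the standard clean ones, and the basis checks in (v), (vi), (viii) are accurate. There is nothing to compare against: the paper does not supply a proof of this proposition but merely cites it from \cite{Zhang1997, rodman}, so your write-up in fact goes beyond what the paper provides.
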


As a result, $\bbH$ is a 4-dimensional division algebra over $\bbR$.
Quaternions are frequently represented as   matrices of size $4$ over $\bbC$. This section will explore the structural properties of these representation.  These representations are described as.

Writing $p=p_0+p_1\bfi+p_2\bfj+p_3\bfk=p_0+p_1\bfi+(p_2+p_3\bfi)\bfj=\tilde{p}+\tilde{q}\bfj$,  with $p_0,p_1,p_2,p_3\in\bbR,$ define 
$$\chi:\bbH\longrightarrow\MM_2[\bbC], \quad\chi(p)=\begin{pmatrix}
	p_0+p_1\bfi & p_2+p_3\bfi \\
	-p_2+p_3\bfi & p_0-p_1\bfi
\end{pmatrix},$$
where, $p_0,p_1,p_2, p_3\in\bbR$, and $\MM_2[\bbC]$ is the $4$ dimensional algebra of matrices over $\bbC$.
\begin{prop}\cite{Zhang1997}
	$\chi$ is a unital isomorphism of $\bbH$ onto the $4$ dimensional algebra of  matrices of the type $\begin{pmatrix} \tilde{p} & \tilde{q} \\-\overline{\tilde{q}} & \overline{\tilde{p}} \end{pmatrix},$ where $\tilde{p}, \tilde{q}\in\bbC$.
\end{prop}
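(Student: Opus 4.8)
The plan is to check directly that $\chi$ has all the defining properties of a unital algebra isomorphism onto the indicated set of matrices, exploiting the fact that $\bbH$ is spanned over $\bbR$ by $\{\bf1,\bfi,\bfj,\bfk\}$ so that every verification reduces to finitely many computations on basis elements. Throughout I use the identification of $\bbC$ with $\mathrm{span}_{\bbR}\{\bf1,\bfi\}\subset\bbH$ recorded above, so that for $\tilde p=p_0+p_1\bfi$ one has $\overline{\tilde p}=p_0-p_1\bfi$; writing $p=\tilde p+\tilde q\bfj$ with $\tilde p=p_0+p_1\bfi$ and $\tilde q=p_2+p_3\bfi$, and noting $-p_2+p_3\bfi=-\overline{\tilde q}$ and $p_0-p_1\bfi=\overline{\tilde p}$, one sees immediately that $\chi(p)=\begin{pmatrix}\tilde p & \tilde q\\ -\overline{\tilde q}&\overline{\tilde p}\end{pmatrix}$. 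This already shows $\chi$ maps $\bbH$ into $\NN:=\set{\begin{pmatrix}a&b\\-\bar b&\bar a\end{pmatrix}:a,b\in\bbC}$, and conversely, given such a matrix with $a=\alpha_0+\alpha_1\bfi$, $b=\alpha_2+\alpha_3\bfi$, the quaternion $\alpha_0+\alpha_1\bfi+\alpha_2\bfj+\alpha_3\bfk$ is mapped onto it; hence $\chi$ is a bijection of $\bbH$ onto $\NN$. Injectivity also follows at once from the formula: $\chi(p)=0$ forces $p_0=p_1=p_2=p_3=0$.

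Next I would show that $\chi$ is an $\bbR$-algebra homomorphism. Each entry of $\chi(p)$ is $\bbR$-linear in $(p_0,p_1,p_2,p_3)$, so $\chi$ is $\bbR$-linear, and $\chi(\bf1)=I_2$ gives unitality. For multiplicativity, note that both maps $(p,q)\mapsto\chi(pq)$ and $(p,q)\mapsto\chi(p)\chi(q)$ are $\bbR$-bilinear (the second because matrix multiplication over $\bbC$ is $\bbC$-bilinear, hence $\bbR$-bilinear, and $\chi$ is $\bbR$-linear), so it suffices to verify $\chi(e_ae_b)=\chi(e_a)\chi(e_b)$ for $e_a,e_b\in\{\bf1,\bfi,\bfj,\bfk\}$. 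The cases with $e_a=\bf1$ or $e_b=\bf1$ are trivial, so what remains is to record $\chi(\bfi)=\begin{pmatrix}\bfi&0\\0&-\bfi\end{pmatrix}$, $\chi(\bfj)=\begin{pmatrix}0&1\\-1&0\end{pmatrix}$, $\chi(\bfk)=\begin{pmatrix}0&\bfi\\ \bfi&0\end{pmatrix}$, and then to check $\chi(\bfi)^2=\chi(\bfj)^2=\chi(\bfk)^2=-I_2$, $\chi(\bfi)\chi(\bfj)=\chi(\bfk)$, $\chi(\bfj)\chi(\bfk)=\chi(\bfi)$, $\chi(\bfk)\chi(\bfi)=\chi(\bfj)$; the three reversed products then follow from these together with the squares. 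These are short $2\times2$ multiplications.

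Finally, since $\chi$ is a unital $\bbR$-algebra homomorphism with image $\NN$, it follows that $\NN$ is a unital $\bbR$-subalgebra of $\MM_2[\bbC]$ and that $\chi\colon\bbH\to\NN$ is an isomorphism of $\bbR$-algebras; alternatively one may verify directly that the product of two matrices of the displayed shape again has that shape, the required identities on the entries being $\overline{a_1a_2-b_1\overline{b_2}}$ in the $(2,2)$ position and $-\overline{a_1b_2+b_1\overline{a_2}}$ in the $(2,1)$ position. The dimension statement then just transports $\dim_{\bbR}\bbH=4$ across the isomorphism. The only point requiring care is the reduction of multiplicativity to basis elements: one must use $\bbR$-bilinearity (not $\bbC$-bilinearity) of both sides, because $\chi$ is only $\bbR$-linear and $\bbH$ is noncommutative, and one must keep the conjugations straight in the entrywise checks; beyond this bookkeeping there is no real obstacle.
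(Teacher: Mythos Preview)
Your proof is correct. The paper does not supply its own proof of this proposition; it is simply quoted from \cite{Zhang1997} without argument, so there is nothing to compare against beyond noting that your direct verification on basis elements is the standard route and is carried out cleanly.
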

Just like quaternion numbers, one can also express quaternion matrices in a complex combination of complex matrices. Let $M$ be any matrix $n\times n$ with quaternion entries, then we may express $M$ as $M= A_0 + A_1\bfi + A_2\bfj + A_3\bfk= A + B\bfj$, where
$A=A_0+A_1\bfi$ , $B=A_2+A_3\bfi$ are $n\times n$ complex matrices. Denote by $\chi_1 (M)$, the complex representation matrix of quaternion matrix $M$ as
\begin{center}
	$\chi_1(M)=\begin{pmatrix}
		A&  B \\
		-\overline{B} & \overline{A}
	\end{pmatrix} .$
\end{center}

\begin{example}
	Let $M=\begin{pmatrix}
		\bfi+\bfj+\bfk&  \bf1+\bfk \\
		\bfi+\bfj & \bfj+\bfk
	\end{pmatrix}$, then $\chi_1(M)=\begin{pmatrix}
		\bfi&  \bf1+\bfi&\bf1&\bfi \\
		-\bf1+\bfi& -\bfi&\bfi&\bf1\\
		\bfi& \bf1&0&\bf1+\bfi\\
		-\bf1& -\bfi&-\bf1+\bfi&0
	\end{pmatrix}.$
\end{example}
\begin{lem}\label{injective of chi}\cite{alpay}
	The map $\chi_1$ is a continuous, injective ring homomorphism. In particular, if $c\in\mathbb{R}$, and  $M,N$ are quaternion matrices of size $n^2$, then
	\begin{enumerate}
		\item[1.] $\chi_1(cM)=c\chi_1(M)$;
		\item[2.] $\chi_1(M + N) = \chi_1(M) + \chi_1(N);$
		\item[3.] $\chi_1(MN) = \chi_1(M)\chi_1(N);$
		\item[4.] $\chi_1(M^*)= \chi_1(M)^*.$
	\end{enumerate}
\end{lem}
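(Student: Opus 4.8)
The plan is to work throughout with the decomposition $M=A+B\bfj$ of a quaternion matrix into complex $n\times n$ matrices $A,B$, and to reduce every claim to the single entrywise rule $\bfj\alpha=\overline{\alpha}\bfj$ for $\alpha\in\bbC$, which is a consequence of Proposition~\ref{basicquaternion}(v). Items~1 and~2 are then immediate from the block form of $\chi_1$: if $M=A+B\bfj$ and $N=C+D\bfj$ then $M+N=(A+C)+(B+D)\bfj$ and, for a real scalar $c$, $cM=(cA)+(cB)\bfj$, so substituting into the definition of $\chi_1$—using $\overline{cB}=c\overline{B}$ since $c$ is real—gives exactly $\chi_1(M+N)=\chi_1(M)+\chi_1(N)$ and $\chi_1(cM)=c\,\chi_1(M)$.

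For Item~3 I would first record two auxiliary identities valid for any complex $n\times n$ matrices $C$ and $D$, namely $\bfj C=\overline{C}\bfj$ and $\bfj D\bfj=-\overline{D}$, both obtained by a one-line entrywise computation from $\bfj\alpha=\overline{\alpha}\bfj$ and $\bfj^{2}=-1$. Expanding $MN=(A+B\bfj)(C+D\bfj)=AC+AD\bfj+B\bfj C+B\bfj D\bfj$ and inserting these identities yields $MN=(AC-B\overline{D})+(AD+B\overline{C})\bfj$, so $\chi_1(MN)$ is the $2\times2$ block matrix built from the blocks $AC-B\overline{D}$ and $AD+B\overline{C}$. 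On the other side one simply multiplies the block matrices $\chi_1(M)$ and $\chi_1(N)$ and checks that the four resulting blocks match, using $\overline{XY}=\overline{X}\,\overline{Y}$ for complex matrices; this is the only computation of any length.

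For Item~4 the idea is to put $M^{*}$ back into the standard form $P+Q\bfj$. Computing the $(i,l)$ entry of $M^{*}$ as $\overline{m_{li}}=\overline{a_{li}+b_{li}\bfj}=\overline{a_{li}}-b_{li}\bfj$—using $\overline{pq}=\overline{q}\,\overline{p}$ from Proposition~\ref{basicquaternion}(vi) together with $\overline{\bfj}=-\bfj$ and $\bfj\overline{b}=b\bfj$—one identifies $M^{*}=A^{*}-B^{\mathrm{T}}\bfj$, where $B^{\mathrm{T}}$ is the ordinary transpose. Substituting this into the definition of $\chi_1$ and comparing with the conjugate transpose of the block matrix $\chi_1(M)$, whose off-diagonal blocks satisfy $\overline{B}^{*}=B^{\mathrm{T}}$ and $\overline{A}^{*}=A^{\mathrm{T}}$, shows the two sides coincide. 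I expect this to be the step needing the most care: it is easy to confuse the quaternionic conjugate transpose with the plain transpose or the complex conjugate transpose, and the commutation $\bfj\overline{b}=b\bfj$ must be tracked carefully when simplifying $\overline{b\bfj}$.

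Finally, injectivity follows because $\chi_1(M)=0$ forces $A=0$ and $B=0$, hence $M=0$, while $\chi_1$ is additive by Item~2; and continuity is automatic, since each entry of $\chi_1(M)$ is a fixed $\bbR$-linear function of the real coordinates of the entries of $M$, so $\chi_1$ is an $\bbR$-linear map between finite-dimensional real vector spaces.
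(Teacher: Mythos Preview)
Your argument is correct in every part: the decomposition $M=A+B\bfj$, the commutation rule $\bfj\alpha=\overline{\alpha}\bfj$, the product computation $MN=(AC-B\overline{D})+(AD+B\overline{C})\bfj$, and the identification $M^{*}=A^{*}-B^{T}\bfj$ all check out, and the block comparisons for Items~3 and~4 go through exactly as you describe. The injectivity and continuity remarks are also fine.

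There is nothing to compare your proof against, however, because the paper does not give its own proof of this lemma: the statement is quoted as a known fact from the reference \cite{alpay} and is used without argument. In that sense you have supplied a self-contained proof where the paper simply invokes the literature; your approach is the standard one and would be essentially what one finds in the cited source.
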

The introduction of the tensor product operator of rank $1$ marks the end of this subsection. 
\begin{defn}\label{tensor}\cite{yagoub2024algebrastoeplitzmatricesquaternion}
	The tensor product $p\otimes q$ of $p,q\in \bbH$ is a $n\times n$ matrix defined by $$(p\otimes q)r = <r,q>p \quad\hbox{for all}\quad r\in\bbH^n.$$
\end{defn}

\subsection{Commutative Quaternions and Their Matrices }
The set $$ \HH =\biggl\{p=p_0+p_1\bfi+p_2\bfj+p_3\bfk| p_0, p_1, p_2, p_3 \in\bbR\biggl\} $$ represents the set of commutative quaternions, where the following product rules are satisfied by the basis elements $\bfi,\bfj,\bfk\notin\bbR$:
$$\bfi^2=\bfk^2=\bfi\bfj\bfk= -\bf1, \quad \bfj^2=\bf1,\quad  \bfi\bfj=\bfj\bfi=\bfk,\quad  \bfj\bfk=\bfk\bfj=\bfi, \quad \bfk\bfi=\bfi\bfk=-\bfj.$$

It is evident from the preceding that the product operation in $\HH$ is commutative, meaning that for each $p,q$ in $\HH$, $pq=qp$.
We now present explicitly in terms of components the product of commutative quaternions. Suppose that $p=p_0+p_1\bfi+p_2\bfj+p_3\bfk$ and $q=q_0+q_1\bfi+q_2\bfj+q_3\bfk$ are in $\HH$, then their product is defined following ways,
\begin{align*}
	pq&=p_0q_0-p_1q_1+p_2q_2-p_3q_3+(p_1q_0+p_0q_1+p_3q_2+p_2q_3)\bfi\\
	+&(p_0q_2+p_2q_0-p_1q_3-p_3q_1)\bfj+(p_3q_0+p_0q_3+p_1q_2+p_2q_1)\bfk.
\end{align*}
By using matrix multiplication, the previous relation can be represented as a real matrix of dimension $16$  as 
$$\begin{pmatrix}
	p_0 & -p_1 & p_2 & -p_3 \\
	p_1 & p_0 & p_3 & p_2 \\
	p_2 & -p_3 & p_0 & -p_1 \\
	p_3 & p_2 & p_1 & p_0
\end{pmatrix}.$$
Which is quite helpful in computing quaternions. 

$$
\begin{pmatrix}
	r_0 \\
	r_1 \\
	r_2 \\
	r_3
\end{pmatrix}
=\begin{pmatrix}
	p_0 & -p_1 & p_2 & -p_3 \\
	p_1 & p_0 & p_3 & p_2 \\
	p_2 & -p_3 & p_0 & -p_1 \\
	p_3 & p_2 & p_1 & p_0
\end{pmatrix}
\begin{pmatrix}
	q_0 \\
	q_1 \\
	q_2 \\
	q_3
\end{pmatrix}, \quad\hbox{where} \quad pq=r_0+r_1\bfi +r_2\bfj+r_3\bfk. 
$$
It should be noted that there are some striking differences between $\HH$ and $\bbH$. For any $p=p_0+p_1\bfi+p_2\bfj+p_3\bfk\in\bbH$, there is only one adjoint $\overline{p}=p_0-p_1\bfi-p_2\bfj-p_3\bfk$, while for $p=p_0+p_1\bfi+p_2\bfj+p_3\bfk\in\HH$, there exist three types of conjugate, called the principal conjugate, symbolized by $p^{(1)}$, $p^{(2)}$ and $p^{(3)}$ and are given as: 
\begin{align*}
	p^{(1)}&=p_0-p_1\bfi+p_2\bfj-p_3\bfk\\
	p^{(2)}&=p_0+p_1\bfi-p_2\bfj-p_3\bfk\\  
	p^{(3)}&=p_0-p_1\bfi-p_2\bfj+p_3\bfk.
\end{align*}
The norm $\|p\|$ of $p$  is given by 
$$\|p\| =\sqrt[4]{|pp^{(1)}p^{(2)}p^{(3)}|} =\sqrt[4]{[(p_0+p_2)^2+(p_1+p_3)^2][(p_0-p_2)^2+(p_1-p_3)^2]}\geq0.$$ 
If $p_0+p_2=0$, $p_1+p_3=0$ or $p_0-p_2=0$, $p_1-p_3=0$, then $\|p\|=0$, and if $\|p\|\neq 0$, then $p$ has an inverse with respect to  multiplication given as  $p^{-1}=\frac{pp^{(1)}p^{(2)}p^{(3)}}{\|p\|^4}$.

The following result shows that, similar to quaternions, we can express every commutative quaternion as a $2\times 2$ complex matrix.
\begin{thm}\cite{Kosal2014}
	Each commutative quaternion has a distinct representation as a complex array of size $4$.
\end{thm}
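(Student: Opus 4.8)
The statement asserts the existence and uniqueness of a faithful $2\times2$ complex matrix representation of $\HH$, in exact analogy with the isomorphism $\chi$ for $\bbH$ recorded above. The plan is to exhibit the map, to verify that it is well defined and injective (this injectivity is the \emph{distinct} part of the claim), and then to confirm that it is a unital $\bbR$-algebra homomorphism, so that it genuinely is a representation of $\HH$. First I would use $\bfk=\bfi\bfj$ to write an arbitrary $p=p_0+p_1\bfi+p_2\bfj+p_3\bfk\in\HH$ as $p=a_1+a_2\bfj$, where $a_1=p_0+p_1\bfi$ and $a_2=p_2+p_3\bfi$ lie in the copy of $\bbC$ spanned over $\bbR$ by $\bf1$ and $\bfi$, and then set
\[
\psi\colon\HH\longrightarrow\MM_2[\bbC],\qquad
\psi(p)=\begin{pmatrix} a_1 & a_2\\ a_2 & a_1\end{pmatrix}.
\]
Existence of such a $2\times2$ complex representation is then immediate, and the remaining content is entirely about the properties of $\psi$.

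Well-definedness and injectivity are forced by the fact that $\{\bf1,\bfi,\bfj,\bfk\}$ is an $\bbR$-basis of $\HH$: the pair $(a_1,a_2)\in\bbC^2$ is uniquely determined by $p$, so $\psi(p)$ is one well-defined matrix, and conversely $\psi(p)=\psi(q)$ forces $a_1=b_1$ and $a_2=b_2$ on reading off the $(1,1)$ and $(1,2)$ entries, hence $p=q$; this is exactly the distinctness assertion. To see that $\psi$ respects the algebra structure I would check $\psi(\bf1)=I_2$ and $\bbR$-linearity (both read off the formula), and then multiplicativity: writing $q=b_1+b_2\bfj$ and using $\bfj^2=\bf1$ together with the commutativity of $\HH$ (so that $\bfj$ commutes with each $a_i,b_i\in\bbC$), one gets $pq=(a_1b_1+a_2b_2)+(a_1b_2+a_2b_1)\bfj$, which matches
\[
\psi(p)\psi(q)=\begin{pmatrix} a_1 & a_2\\ a_2 & a_1\end{pmatrix}\begin{pmatrix} b_1 & b_2\\ b_2 & b_1\end{pmatrix}=\begin{pmatrix} a_1b_1+a_2b_2 & a_1b_2+a_2b_1\\ a_1b_2+a_2b_1 & a_1b_1+a_2b_2\end{pmatrix}.
\]
Thus $\psi$ is an injective unital $\bbR$-algebra homomorphism, i.e.\ an isomorphism of $\HH$ onto the subalgebra of $\MM_2[\bbC]$ consisting of matrices of the form $\begin{pmatrix}a&b\\b&a\end{pmatrix}$.

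A conceptually cleaner route, which I would probably present as well, uses the idempotents $e_{\pm}=\tfrac12(\bf1\pm\bfj)$: one checks $e_\pm^2=e_\pm$, $e_+e_-=0$, $e_++e_-=\bf1$, and $\bfj e_\pm=\pm e_\pm$, so that $p=(a_1+a_2)e_++(a_1-a_2)e_-$ and the assignment $p\mapsto(a_1+a_2,\,a_1-a_2)$ is an isomorphism of $\HH$ onto $\bbC\oplus\bbC$ with componentwise operations. Composing the diagonal embedding $\bbC\oplus\bbC\hookrightarrow\MM_2[\bbC]$ with conjugation by the fixed invertible matrix $\tfrac1{\sqrt2}\begin{pmatrix}1&1\\1&-1\end{pmatrix}$ reproduces exactly $\psi$, which therefore inherits all the homomorphism properties at once. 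I do not expect a genuine obstacle in any of this: uniqueness is automatic from the basis, and the only step that calls for care is the multiplicativity check, where expanding directly in the four real components of $p$ and $q$ (via the product formula displayed earlier in this section) is error-prone; working with the form $p=a_1+a_2\bfj$, or with the idempotent decomposition, removes that difficulty completely.
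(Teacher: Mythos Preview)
Your argument is correct. The paper does not actually supply its own proof of this statement; it merely quotes the result from \cite{Kosal2014} and moves on. Your construction of the map
\[
\psi(p)=\begin{pmatrix} a_1 & a_2\\ a_2 & a_1\end{pmatrix},\qquad p=a_1+a_2\bfj,
\]
together with the verification that $\psi$ is an injective unital $\bbR$-algebra homomorphism, is exactly the standard proof one finds in \cite{Kosal2014}, and your multiplicativity check using $\bfj^2=\bf1$ and $\bfj a=a\bfj$ for $a\in\bbC$ is clean and correct. The idempotent decomposition via $e_\pm=\tfrac12(\bf1\pm\bfj)$ that you add as a second route is a nice conceptual bonus (it explains the splitting $\HH\cong\bbC\oplus\bbC$ and hence the presence of zero divisors mentioned in the paper), but it goes beyond what the paper itself records. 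There is no gap in what you wrote.
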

The algebra of matrices, whose elements are all in $\HH$, is symbolized throughout by $\MM_n[\HH]$.
The matrix addition is the ordinary matrix addition and the product is the usual matrix product. For any $T=(p_{ij}))_{i,j=0}^{n-1}$ in $\MM_n[\HH]$, the scalar multiplication is defined as   
$$
qT=Tq=(qp_{ij})_{i,j=0}^{n-1}, \quad q\in\HH. 
$$
The reader can see that if $T, U$ in $\MM_n[\HH]$ and $q, r\in\HH$, then 
\begin{align*}
	&(qT)U=q(TU),\\
	&(Tq)U=T(qU),\\
	&(qr)T=q(rT).
\end{align*}

Furthermore, $\MM_n[\HH]$ is a free module over $\HH$.  If  $T=(p_{ij})_{i,j=0}^{n-1}\in M_n[\HH],$ then there are three different conjugates, known as principal conjugates:
$T^{(1)}=(p_{ij}^{(1)})_{i,j=0}^{n-1}\in\MM_n[\HH]$, $T^{(2)}=(p_{ij}^{(2)})_{i,j=0}^{n-1}\in \MM_n[\HH]$  and $T^{(3)}=(p_{ij}^{(3)})_{i,j=0}^{n-1}\in\MM_n[\HH]$.
$T^t=(p_{ji})_{i,j=0}^{n-1}\in \MM_n[\HH]$ is the transpose of $T;$ $T^{\dagger \kappa} =(T^{(\kappa)})^t\in \MM_n[\HH]$ is the $\kappa^{th}$ adjoint of $T$, where $\kappa=1,2,3$.
By $\kappa^{th}$ conjugate, any $T\in M_n(\HH)$ is considered a normal matrix if $T$ commutes with $T^{\dagger \kappa}$, that is, $TT^{\dagger \kappa}=T^{\dagger \kappa}T$.

The following result from \cite{Kosal2014} gathers a few of the main needful properties of the matrices of $\MM_n[\HH]$.
\begin{thm}
	\begin{itemize}
		Suppose that $T, U\in \MM_n[\HH]$, the below hold
		\item [(i)]   $(T^{(\kappa)})^t=(T^t)^{(\kappa)};$\\
		\item [(ii)]  $(TU)^{\dagger \kappa}=U^{\dagger \kappa}T^{\dagger \kappa}$, where $\kappa=1,2,3$;\\
		\item[(iii)] $(TU)^t=U^tT^t$;\\
		\item[(iv)]  $(TU)^{(\kappa)}=T^{(\kappa)}U^{(\kappa)}$;\\
		\item[(v)] Fo $\kappa, \rho, \varrho=1,2,3$, one has  $(T^{(\kappa)})^{(\rho)}=\begin{cases}
			T^{(\varrho)} \quad \hbox{if}\quad \kappa\neq\rho\neq\varrho\\
			T \quad \hbox{if} \quad \kappa=\rho.
		\end{cases} 
		$
	\end{itemize}
\end{thm}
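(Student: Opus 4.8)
The plan is to reduce the five identities to two inputs: the behaviour of the three principal conjugations at the level of scalars in $\HH$, and the standard bookkeeping of matrix transposition over a commutative ring.

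First I would record the scalar facts. Each principal conjugation $\sigma_\kappa\colon p\mapsto p^{(\kappa)}$, $\kappa=1,2,3$, is manifestly $\bbR$-linear and fixes $\bf1$, so to see that it is an $\bbR$-algebra automorphism of $\HH$ it suffices to check $\sigma_\kappa(xy)=\sigma_\kappa(x)\sigma_\kappa(y)$ for $x,y\in\{\bfi,\bfj,\bfk\}$ --- a finite verification against the multiplication table of $\HH$. Concretely $\sigma_1$ flips the $\bfi$- and $\bfk$-coordinates while keeping $\bfj$, $\sigma_2$ flips $\bfj,\bfk$ and keeps $\bfi$, and $\sigma_3$ flips $\bfi,\bfj$ and keeps $\bfk$; since a product of two distinct imaginary units is $\pm$ the third (e.g.\ $\bfi\bfj=\bfk$, $\bfk\bfi=-\bfj$) and each imaginary unit squares to $\pm\bf1$, the sign changes are compatible in every entry of the table (for instance $\sigma_1(\bfi\bfj)=\sigma_1(\bfk)=-\bfk=(-\bfi)\bfj=\sigma_1(\bfi)\sigma_1(\bfj)$). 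The same table shows each $\sigma_\kappa$ is an involution and that the composite of any two distinct ones is the third, i.e.\ $\sigma_\kappa\sigma_\rho=\sigma_\varrho$ whenever $\{\kappa,\rho,\varrho\}=\{1,2,3\}$; equivalently $\{\mathrm{id},\sigma_1,\sigma_2,\sigma_3\}$ is a Klein four-group. This is precisely the scalar content of (v), and it contains the scalar content of (iv) since each $\sigma_\kappa$ is a ring homomorphism.

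Next I would lift everything to $\MM_n[\HH]$. Identities (i), (iv) and (v) are entry-wise: transposition only interchanges the two indices, while conjugation acts on the value stored at each entry, so they commute and (i) is immediate; since $\sigma_\kappa$ is a ring homomorphism of $\HH$, $\bigl((TU)^{(\kappa)}\bigr)_{ij}=\sigma_\kappa\bigl(\sum_k T_{ik}U_{kj}\bigr)=\sum_k \sigma_\kappa(T_{ik})\,\sigma_\kappa(U_{kj})=\bigl(T^{(\kappa)}U^{(\kappa)}\bigr)_{ij}$, which is (iv); and applying the Klein-four relations entry-wise gives (v). Identity (iii) is the familiar fact that the transpose reverses a product: $\bigl((TU)^t\bigr)_{ij}=\sum_k T_{jk}U_{ki}=\sum_k U_{ki}T_{jk}=\bigl(U^tT^t\bigr)_{ij}$, the middle equality being exactly where commutativity of $\HH$ is used. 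Finally (ii) follows by combining (iii) and (iv) with the definition $T^{\dagger\kappa}=(T^{(\kappa)})^t$: $(TU)^{\dagger\kappa}=\bigl((TU)^{(\kappa)}\bigr)^t=\bigl(T^{(\kappa)}U^{(\kappa)}\bigr)^t=(U^{(\kappa)})^t(T^{(\kappa)})^t=U^{\dagger\kappa}T^{\dagger\kappa}$.

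The only step with any real content is the scalar verification that the three principal conjugations respect $\HH$-multiplication, together with the Klein-four relations among them; this is a short finite computation, and it is conceptually transparent from the $\bbR$-algebra isomorphism $\HH\cong\bbC\times\bbC$ furnished by the complementary idempotents $\tfrac12(\bf1+\bfj)$ and $\tfrac12(\bf1-\bfj)$, under which the three conjugations $\sigma_\kappa$ become complex conjugation in both factors, the coordinate swap, and their composite. Everything afterwards is routine, the one subtlety being that commutativity of $\HH$ is precisely what makes (iii) --- and hence (ii) --- valid.
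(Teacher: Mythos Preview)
The paper does not actually prove this theorem: it is quoted verbatim as a result from \cite{Kosal2014}, with no argument supplied. So there is no ``paper's own proof'' to compare against.

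Your argument is correct and complete. The key substantive step---verifying that each principal conjugation $\sigma_\kappa$ is an $\bbR$-algebra automorphism of $\HH$ and that $\{\mathrm{id},\sigma_1,\sigma_2,\sigma_3\}$ is a Klein four-group---is handled cleanly, and the remark about the splitting $\HH\cong\bbC\times\bbC$ via the idempotents $\tfrac12(\bf1\pm\bfj)$ gives a conceptual reason why the finite table-check must succeed. After that, (i), (iv), (v) are immediate entry-wise, (iii) uses commutativity of $\HH$ exactly once, and (ii) follows from (iii), (iv) and the definition of $\dagger\kappa$. Nothing is missing.
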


	\section{Algebraic Properties of Quaternion Toeplitz Matrices}
	In this section, we study in detail about the structure and the intricate properties of quaternion Toeplitz matrices. We extend the idea of Toeplitz matrices to the quaternion setting, which introduces more complexities arising from the non commutative behavior of $\bbH$. We consider it necessary to clarify here that the techniques employed in proving the results of this section are largely inspired by the methods developed in \cite{Gu2003}. However, our approach is not a simple repetition; rather, it represents an adaptation and refinement of those ideas within the framework of our setting.
	 We now present the formal definition of Toeplitz matrix having all entries in $\bbH$.  
	\begin{defn}
		If the elements of a finite square matrix are constant along each negative sloping diagonal, the matrix is known to as a quaternion Toeplitz matrix. Stated differently, the arrangement of the matrix elements makes each row a shifted version of the one before it.
		In more general setting, we introduce $a_{ij} = t_{i-j},$ whereas $0\leq i,j\leq  n-1.$
	\end{defn}
	Thus a finite quaternion Toeplitz matrix $T$ has the following structure:
	$$T=T(p,\psi)+p_0I=\begin{pmatrix}
		p_0 & \overline{\psi_1} & \overline{\psi_2} & \cdots & \overline{\psi_{n-1}} \\
		p_1 & p_0 & \overline{\psi_1} & \cdots & \overline{\psi_{n-2}} \\
		p_2 & p_1 & p_0 & \cdots & \overline{\psi_{n-3}} \\
		\vdots & \vdots & \vdots & \ddots & \vdots \\
		p_{n-1} & p_{n-2} & p_{n-3} & \cdots & p_0
	\end{pmatrix}.$$
	In this case, the term "quaternion" means that entries in the matrix representation given above come from the algebra $\bbH$.
	
	Let $\Gamma$ be the square matrix consisting of zeroes with the exception of $\bf1$'s positioned precisely along the subdiagonal, i.e., the diagonal immediately below the principal diagonal, that is, $\Gamma$ has the form:
	$$\Gamma=\begin{pmatrix}
		0 & 0 & 0 &  \cdots & 0 & \\
		\bf1 & 0 & 0 & \cdots & 0  & \\
		0 & \bf1 & 0 &  \cdots & 0 & \\
		\vdots & \vdots & \ddots & \cdots & \vdots \\
		0 & 0 & 0 &\cdots & 0
	\end{pmatrix},$$ then the following matrix represents its adjoint, which is indicated below as $\Gamma^*:$
	$$\Gamma^*= \begin{pmatrix}
		0 & \bf1 & 0 & \cdots & 0 \\
		0 & 0 & \bf1 & \cdots & 0 \\
		0 & 0 & 0 & \cdots & 0 \\
		\vdots & \vdots & \ddots & \cdots & \bf1 \\
		0 & 0 & 0 & \cdots & 0
	\end{pmatrix}.$$
	It is apparent from the definition of $\Gamma$ that $\Gamma^n=\Gamma^{*n}=0.$ Throughout this chapter, the following notations will be employed:
	\begin{itemize}
		\item $\MM_n[\bbH]$ indicates the space of finite square matrices with all of their elements in $\bbH$;
		\item $\TT_n[\bbH]$ indicates the space of finite square  Toeplitz matrices with all of their elements in $\bbH$.
	\end{itemize}
	\begin{lem}\label{sum lemma}
		Let $T$ be a matrix of size $n^2$, then
		$$T=\sum_{0\leq \ell\leq n-1}\Gamma^\ell(T-\Gamma T\Gamma^*){\Gamma^\ell}^*.$$
	\end{lem}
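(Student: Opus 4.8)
The plan is to prove the identity by understanding the effect of conjugating a matrix by $\Gamma$, and then recognizing the sum as a telescoping decomposition. First I would note that for any $n\times n$ matrix $S=(s_{ij})_{i,j=0}^{n-1}$, the product $\Gamma S \Gamma^*$ has $(i,j)$ entry equal to $s_{i-1,j-1}$ when $1\le i,j\le n-1$ and $0$ whenever $i=0$ or $j=0$; this is immediate from the fact that left multiplication by $\Gamma$ shifts rows down by one (killing the top row) and right multiplication by $\Gamma^*$ shifts columns right by one (killing the leftmost column). Iterating, $\Gamma^\ell S {\Gamma^\ell}^*$ has $(i,j)$ entry $s_{i-\ell,j-\ell}$ when $\ell\le i$ and $\ell\le j$, and $0$ otherwise. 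In particular $\Gamma^\ell X {\Gamma^\ell}^*$ is supported on the block of indices $\{\ell,\ell+1,\dots,n-1\}^2$.

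Next I would write $X := T-\Gamma T \Gamma^*$ and observe that, by the entry computation above, $X$ agrees with $T$ on the first row and first column ($i=0$ or $j=0$), while for $i,j\ge 1$ we have $x_{ij}=t_{ij}-t_{i-1,j-1}$. So $\Gamma^\ell X {\Gamma^\ell}^*$ has $(i,j)$ entry, for $i,j\ge \ell$, equal to $t_{i-\ell,j-\ell}-t_{i-\ell-1,j-\ell-1}$ when additionally $i>\ell$ and $j>\ell$, and equal to $t_{0,j-\ell}$ or $t_{i-\ell,0}$ on the "boundary" where $i=\ell$ or $j=\ell$ (with $t_{00}$ at the corner $i=j=\ell$).

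Then I would fix an entry position $(i,j)$ with, say, $i\le j$ (the case $i\ge j$ is symmetric), set $m=\min(i,j)=i$, and sum the $(i,j)$ entries of $\Gamma^\ell X {\Gamma^\ell}^*$ over $\ell=0,\dots,n-1$. Only the terms with $\ell\le m$ contribute. For $\ell=0,1,\dots,m-1$ the contribution is $t_{i-\ell,j-\ell}-t_{i-\ell-1,j-\ell-1}$, and for $\ell=m$ the contribution is $t_{i-m,j-m}=t_{0,j-i}$ (the boundary term, since $i=m$). The sum telescopes: $\sum_{\ell=0}^{m-1}\big(t_{i-\ell,j-\ell}-t_{i-\ell-1,j-\ell-1}\big) + t_{0,j-i} = t_{i,j}-t_{i-m,j-m}+t_{0,j-i}=t_{ij}$, since $t_{i-m,j-m}=t_{0,j-i}$. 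Hence the $(i,j)$ entry of the right-hand side equals $t_{ij}$, which proves the identity.

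I do not expect a serious obstacle here; the only thing requiring care is bookkeeping the boundary terms of $X=T-\Gamma T\Gamma^*$ correctly (the first row and first column of $X$ coincide with those of $T$, not with a difference), and making sure the telescoping sum is indexed so that the last surviving term is exactly the boundary term rather than an interior difference. An alternative, perhaps cleaner, route would be to argue purely operator-theoretically: let $\Phi(S)=\Gamma S\Gamma^*$ and note $\Phi^n=0$ since $\Gamma^n=0$, so $\sum_{\ell=0}^{n-1}\Phi^\ell$ is the formal inverse of $I-\Phi$ acting on $\MM_n[\bbH]$; applying $(I-\Phi)$ on the right to both sides of $T=\sum_{\ell}\Phi^\ell(T-\Phi(T))$ reduces the claim to $T = \sum_{\ell=0}^{n-1}\big(\Phi^\ell(T)-\Phi^{\ell+1}(T)\big)$, which telescopes to $T-\Phi^n(T)=T$. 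This avoids entrywise computation entirely, and since Lemma~\ref{injective of chi} and the module structure guarantee the relevant additivity, I would likely present this version as the proof.
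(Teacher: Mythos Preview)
Your proposal is correct. The ``alternative, perhaps cleaner, route'' you describe at the end---writing $\Phi(S)=\Gamma S\Gamma^*$, noting $\Phi^n=0$, and telescoping $\sum_{\ell=0}^{n-1}(\Phi^\ell(T)-\Phi^{\ell+1}(T))=T-\Phi^n(T)=T$---is exactly the paper's proof, which occupies three lines. Your primary entrywise argument is a valid but considerably longer route: it unpacks the same telescoping at the level of individual matrix entries, which makes explicit why the boundary terms (first row/column of $X$) supply exactly the missing summand, but this bookkeeping is unnecessary once one observes that $\Gamma^n=0$ forces the operator sum to collapse. Since you already identified the short argument and intend to present that one, your write-up will coincide with the paper's.
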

	\begin{proof}
		The proof follows directly, since
		\begin{align*}
			\sum_{0\leq \ell\leq n-1}\Gamma^\ell(T-\Gamma T\Gamma^*){\Gamma^\ell}^*
			&=\sum_{0\leq \ell\leq n-1}\Gamma^\ell(T-\Gamma T \Gamma^*){\Gamma^\ell}*\\
			&=\sum_{0\leq \ell\leq n-1}(\Gamma^\ell T{\Gamma^\ell}^*-\Gamma^{\ell+1} T \Gamma^{\ell+1*})=T-\Gamma^n T \Gamma^n*=T.
		\end{align*}
		We use that $\Gamma^n=0$.
	\end{proof}
	As a consequence of the above result, studying the much simpler equation $$T-\Gamma T\Gamma^*=0.$$ suffices to determine whether $T$=0.
	
	The following result gives us the classification of all quaternion Toeplitz matrices among all quaternion matrices. 
	\begin{prop}\label{Toeplitz lemma}
		Suppose that $T$ is in $\MM_n[\bbH]$, then the below assertions are proportionate.
		\begin{itemize}
			\item [(i)] $T$ is in $\TT_n[\bbH]$.
			\item [(ii)]  There exist $p,\psi\in \bbH^n$ such that $T-\Gamma T\Gamma^*=p\otimes e_0+e_0\otimes \psi$.
		\end{itemize}
	\end{prop}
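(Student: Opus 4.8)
The plan is to set up a dictionary between the two conditions through the entries of the ``displacement'' $T-\Gamma T\Gamma^*$: on one hand $T$ is Toeplitz exactly when this displacement is supported on the zeroth row and zeroth column, and on the other hand the matrices of the form $p\otimes e_0+e_0\otimes\psi$ are precisely those supported on the zeroth row and zeroth column. Matching these two descriptions yields the equivalence, and neither direction requires any input beyond a direct entrywise computation.

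First I would compute $\Gamma T\Gamma^*$ entrywise. Since the only nonzero entries of $\Gamma$ are the real units $\bf1$ on the subdiagonal, left multiplication by $\Gamma$ shifts rows down by one and right multiplication by $\Gamma^*$ shifts columns left by one, so that for $T=(t_{ij})_{i,j=0}^{n-1}$ one gets $(\Gamma T\Gamma^*)_{ij}=t_{i-1,j-1}$ when $i,j\ge1$ and $(\Gamma T\Gamma^*)_{ij}=0$ when $i=0$ or $j=0$. Hence
\[
(T-\Gamma T\Gamma^*)_{ij}=\begin{cases}\, t_{ij}-t_{i-1,j-1}, & i,j\ge 1,\\[2pt] \, t_{ij}, & i=0\ \text{or}\ j=0.\end{cases}
\]
Since $T\in\TT_n[\bbH]$ means that $t_{ij}$ depends only on $i-j$, which is equivalent to $t_{ij}=t_{i-1,j-1}$ for all $i,j\ge1$ (the forward direction is immediate, the reverse a one-line induction on $\min(i,j)$), we conclude that $T$ is Toeplitz if and only if $(T-\Gamma T\Gamma^*)_{ij}=0$ for all $i,j\ge1$.

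Next I would read off the matrix of $p\otimes e_0+e_0\otimes\psi$ from Definition~\ref{tensor}. Evaluating on the standard basis, $(p\otimes e_0)e_j=\<e_j,e_0\>p=\delta_{j0}\,p$, so $p\otimes e_0$ is the matrix with zeroth column $p$ and zeros elsewhere; similarly $(e_0\otimes\psi)e_j=\<e_j,\psi\>e_0$, so $e_0\otimes\psi$ has zeroth row $(\overline{\psi_0},\dots,\overline{\psi_{n-1}})$ and zeros elsewhere, in accordance with the displayed form of a Toeplitz matrix above. Thus $(p\otimes e_0+e_0\otimes\psi)_{ij}=p_i\delta_{j0}+\overline{\psi_j}\,\delta_{i0}$, which vanishes whenever $i,j\ge1$. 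This gives (ii)$\Rightarrow$(i) at once: if $T-\Gamma T\Gamma^*=p\otimes e_0+e_0\otimes\psi$ then $(T-\Gamma T\Gamma^*)_{ij}=0$ for $i,j\ge1$, so $T$ is Toeplitz by the previous step. For (i)$\Rightarrow$(ii) I would set $S:=T-\Gamma T\Gamma^*$, note that Toeplitzness forces $S_{ij}=0$ for $i,j\ge1$, and take $p\in\bbH^n$ with $p_i:=S_{i0}$ for all $i$, together with $\psi\in\bbH^n$ given by $\psi_0:=0$ and $\psi_j:=\overline{S_{0j}}$ for $j\ge1$; comparing against $(p\otimes e_0+e_0\otimes\psi)_{ij}=p_i\delta_{j0}+\overline{\psi_j}\,\delta_{i0}$ entry by entry — the $(0,0)$ entry working out because $\overline{\psi_0}=0$ — shows $S=p\otimes e_0+e_0\otimes\psi$.

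I do not expect a genuine difficulty; the whole proof is bookkeeping. The two points that call for care are getting the index shifts in $\Gamma T\Gamma^*$ right and tracking the conjugation in $e_0\otimes\psi$ so that the $\psi$ I build reproduces the zeroth row of $S$ rather than its conjugate. Because $\Gamma$, $\Gamma^*$, and $e_0$ all have real entries, the non-commutativity of $\bbH$ never enters, and the argument runs exactly as in the complex setting of \cite{Gu2003}.
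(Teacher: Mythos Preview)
Your proof is correct and follows essentially the same route as the paper: both arguments compute the displacement $T-\Gamma T\Gamma^*$ entrywise, identify its support with the zeroth row and column, and match this against the explicit matrix of $p\otimes e_0+e_0\otimes\psi$. Your presentation is slightly more streamlined in that you isolate the common characterization ``supported on the zeroth row and column'' before invoking it in both directions, whereas the paper writes out the two full matrices and compares them, but the substance is identical.
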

	\begin{proof}
		$(ii) \implies (i)$
		Suppose that $T=(p_{ij})_{i,j=0}^{n-1}$ is in $\MM_n[\bbH]$ satisfying
		$$
		T-\Gamma T\Gamma^*=p\otimes e_0+e_0\otimes \psi\quad \hbox{ for some}\quad p,\psi\in\bbH^n.
		$$ 
		Let $p=\begin{pmatrix}
			p_0\\
			p_1\\
			\vdots\\
			p_{n-1}
		\end{pmatrix}$ and $q=\begin{pmatrix}
			\psi_0\\
			\psi_1\\
			\vdots\\
			\psi_{n-1}
		\end{pmatrix}$ be arbitrary vectors in $\bbH^n$.  
		The vectors $e_0, e_1, \cdots e_{n-1}$ form the standard Hamel basis for $\bbH^n$, then we have
		\begin{align*}
			x\otimes e_0(e_j)&=<e_j,e_0>x \\
			&=\begin{cases}
				\displaystyle\sum_{0\leq k\leq n-1}p_0e_k\quad\hbox{if}\quad j=0 \\
				0 \quad\quad\quad\quad\hbox{if}\quad j=1,2,\cdots, n-1.
			\end{cases}
		\end{align*}
		Similarly the action of $e_0\otimes \psi$ on $e_j$ yields that
		\[e_0\otimes \psi(e_j)=<e_j,\psi>e_0=\psi_je_0
		\quad \hbox{for every} \quad j=0, 1,\cdots, n-1.\]
		Thus \begin{equation}\label{basis matrix}
			x\otimes e_0+e_0\otimes y =\begin{pmatrix}
				p_0+\psi_0 & \psi_1 & \cdots & \psi_{n-1} \\
				p_1 & 0 & \cdots & 0 \\
				\vdots & \vdots & \cdots & \vdots \\
				p_{n-1} & 0 & \cdots & 0
			\end{pmatrix}.
		\end{equation}
		On the other hand, simple computation implies that
		\begin{equation}\label{general matrix}T-\Gamma T\Gamma^*=\begin{pmatrix}
				p_{00} & p_{01} & p_{02} & \cdots & p_{0,{n-2}} &  p_{0,{n-1}} \\
				p_{10} & p_{11}-p_{00} & p_{12}-p{01} & \cdots & p_{1,{n-2}}-p{0, n-1} &  p_{1,{n-1}}-p_{0, n-2} \\
				p_{20} & p_{21}-p_{10} & p_{22}-p_{11} & \cdots & p_{2,{n-2}}-p_{1, n-3} &  p_{2,{n-1}}-p_{1, n-2} \\
				\vdots & \vdots & \vdots & \cdots & \vdots & \vdots \\
				p_{{n-1},0} & p_{{n-1},1}-p_{n-2,0} &  p_{{n-1},2}-p_{n-2,1} & \cdots & p_{n-1,{n-2}}-p_{n-2,n-3 } & p_{{n-1},{n-1}}-p_{n-2,n-2}
			\end{pmatrix}.
		\end{equation}
		We have from the comparison of the entries of (\ref{basis matrix}) and (\ref{general matrix}),
		$$
		p_{11}-p_{00}=0, \cdots ,p_{{n-1},{n-1}}=p_{{n-2},{n-2}}
		,$$
		i.e., $T$ is in $\TT_n[\bbH]$. This is the result we aimed to establish.
		
		$(i)\implies (ii)$ Let us assume that $T$ has a Toeplitz structure, that is
		$$T=\begin{pmatrix}
			p_0 & \overline{\psi_1}  & \cdots  & \overline{\psi_{n-1}} \\
			p_1 & p_0  & \cdots &  \overline{\psi_{n-2}} \\
			p_2 & p_1 & \cdots &  \overline{\psi_{n-3}} \\
			\vdots & \vdots  & \ddots & \vdots & \\
			p_{n-1} & p_{n-2}  & \cdots  & p_0
		\end{pmatrix},$$ then $T-\Gamma T\Gamma^*$ is a matrix whose entries all are zero except the entries at the position $(0,j)$ and $(j,0)$ for every $j=0,1,2, \cdots n-1$, i.e.,
		$$T-\Gamma T\Gamma^*=\begin{pmatrix}
			p_0 & \overline{\psi_1} & \overline{\psi_2} & \cdots & \overline{\psi_{n-1}} \\
			p_1 & 0 & 0 & \cdots &  0 \\
			p_2 & 0 & 0 & \cdots &  0 \\
			\vdots & \vdots & \vdots & \cdots & \vdots \\
			p_{n-1} & 0 & 0 & \cdots & 0
		\end{pmatrix}. $$
		The reader can easily verified that the identity $T-\Gamma T\Gamma^*=p\otimes e_0+e_0\otimes \psi$ is valid, if one take
		$p=\begin{pmatrix}
			p_0 \\
			p_1 \\
			p_2 \\
			\vdots \\
			p_{n-1}
		\end{pmatrix},$
		and
		$\psi=\begin{pmatrix}
			0 \\
			{\psi_1} \\
			{\psi_2} \\
			\vdots \\
			{\psi_{n-1}}
		\end{pmatrix}$. The proof is finished.
	\end{proof}
	The aforementioned lemma leads to the pleasant conclusion that follows. 
	\begin{prop} Suppose that  $T\in\MM_n[\mathbb{H}]$, then the below are analogous
		\begin{itemize}
			\item [(i)] $T$ is in $\TT_n[\bbH].$
			\item [(ii)] $\chi_1(T)$ is in $\TT_{2n}[\bbC].$
		\end{itemize}
	\end{prop}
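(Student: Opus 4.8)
The plan is to transfer the characterization of $\TT_n[\bbH]$ obtained in Proposition~\ref{Toeplitz lemma} up to size $2n$ through the complex-representation map $\chi_1$, exploiting the ring-homomorphism properties of Lemma~\ref{injective of chi}. Throughout, write $T=A_0+A_1\bfi+A_2\bfj+A_3\bfk=A+B\bfj$ with $A=A_0+A_1\bfi$ and $B=A_2+A_3\bfi$ complex $n\times n$ matrices, so that $\chi_1(T)=\begin{pmatrix} A & B \\ -\overline{B} & \overline{A}\end{pmatrix}$.

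I would first handle $(ii)\implies(i)$, which is immediate. If $\chi_1(T)\in\TT_{2n}[\bbC]$, then its top-left $n\times n$ corner $A$ is Toeplitz, and so is its top-right $n\times n$ corner $B$ (the $(i,j)$ entry of that corner is the value on the $(i-j-n)$-th diagonal of $\chi_1(T)$, hence a function of $i-j$ only). Since the splitting $T=A+B\bfj$ is entrywise, constancy of both $A$ and $B$ along negatively sloping diagonals is exactly constancy of $T$ along its diagonals, and therefore $T\in\TT_n[\bbH]$.

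For $(i)\implies(ii)$ I would argue through the compression $T-\Gamma T\Gamma^{*}$. Because $\Gamma$ has real entries, $\chi_1(\Gamma)=\Gamma\oplus\Gamma=:G$, and $G^{n}=0$; by Lemma~\ref{injective of chi} one has $\chi_1(T)-G\,\chi_1(T)\,G^{*}=\chi_1\!\big(T-\Gamma T\Gamma^{*}\big)$, and the telescoping computation of Lemma~\ref{sum lemma} (which only uses $G^{n}=0$) gives $\chi_1(T)=\sum_{0\le\ell\le n-1}G^{\ell}\big(\chi_1(T)-G\chi_1(T)G^{*}\big)G^{\ell *}$. If $T\in\TT_n[\bbH]$, Proposition~\ref{Toeplitz lemma} gives $T-\Gamma T\Gamma^{*}=p\otimes e_0+e_0\otimes\psi$, a matrix supported on its first row and first column; applying $\chi_1$ and reading off the four $n\times n$ blocks shows that each of $A,B,\overline{B},\overline{A}$, after subtracting its own $\Gamma(\cdot)\Gamma^{*}$-compression, is supported on its first row and column, hence Toeplitz. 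The remaining point is to verify that the particular assembly $\begin{pmatrix} A & B \\ -\overline{B} & \overline{A}\end{pmatrix}$ of these four Toeplitz blocks is itself constant along the negatively sloping diagonals of the $2n\times2n$ array, equivalently that $\chi_1(T)-\Gamma_{2n}\chi_1(T)\Gamma_{2n}^{*}$ has the first-row-plus-first-column (rank) shape of Proposition~\ref{Toeplitz lemma} at level $2n$.

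That last verification is where I expect the main difficulty to lie. The obstruction is a mismatch at the level of shifts: $\chi_1$ intertwines $\Gamma_n$ with $\Gamma_n\oplus\Gamma_n$, \emph{not} with the genuine $2n\times2n$ lower shift $\Gamma_{2n}$, so the size-$n$ Toeplitz test does not lift to size $2n$ merely by functoriality; one has to track how the off-diagonal blocks $B$ and $-\overline{B}$ interlock with $A$ and $\overline{A}$ across the block boundary. Concretely, I would either re-run Lemma~\ref{sum lemma} and the equivalence of Proposition~\ref{Toeplitz lemma} with the block shift $G=\Gamma_n\oplus\Gamma_n$ in place of $\Gamma$ (so that the conclusion is phrased directly in terms of the four-block structure of $\chi_1(T)$), or verify the cross-block diagonal compatibility by hand; this compatibility is the heart of the statement, and once it is in place the equivalence follows by combining it with the easy direction above.
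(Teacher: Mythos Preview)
Your argument for $(ii)\Rightarrow(i)$ is correct and more direct than the paper's: the paper routes that direction through Proposition~\ref{Toeplitz lemma} at level $2n$ together with the homomorphism property of $\chi_1$ (Lemma~\ref{injective of chi}), whereas you simply observe that the $n\times n$ corners $A$ and $B$ of a $2n\times 2n$ Toeplitz matrix are themselves Toeplitz, whence $T=A+B\bfj$ is Toeplitz entrywise.

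For $(i)\Rightarrow(ii)$ your caution is well placed---in fact, read literally (with $\TT_{2n}[\bbC]$ meaning scalar $2n\times 2n$ Toeplitz matrices), the implication is \emph{false}, so the ``cross-block diagonal compatibility'' you isolate cannot be established. Already for $n=1$ and $T=(\bfi)\in\TT_1[\bbH]$ one has $\chi_1(T)=\begin{pmatrix}\bfi&0\\0&-\bfi\end{pmatrix}\notin\TT_2[\bbC]$; more generally the main diagonal of $\chi_1(T)$ carries both the complex part of $p_0$ and its conjugate, which need not agree. The paper declares $(i)\Rightarrow(ii)$ ``trivial'' and, in its proof of the other direction, \emph{defines} $\Gamma_{2n}:=\chi_1(\Gamma)=\Gamma\oplus\Gamma$; that block-diagonal shift, fed into Proposition~\ref{Toeplitz lemma}, detects only that each $n\times n$ block of $\chi_1(T)$ is Toeplitz, not that $\chi_1(T)$ is scalar-Toeplitz. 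Under that block interpretation the forward implication really is immediate and there is no block boundary to match across. So the gap in your proposal is not a missing argument but a clash of readings: the scalar-Toeplitz statement you are attempting is strictly stronger than what the paper's proof actually addresses, and it does not hold.
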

	\begin{proof}
		We prove only $(ii)\implies (i)$ as $(i)\implies (ii)$ is trivial. Let us suppose that $\chi_1(T)$ is in $\TT_n[\bbH]$, then by Lemma \ref{Toeplitz lemma} 
		$$\chi_1(T)-\Gamma_{2n}\chi_1(T)\Gamma_{2n}^* = \chi_1(p)\otimes e_{0}^\prime+ e_{0}^\prime \otimes \chi_1(\psi).$$
		Where $e_0^\prime$ is $2n\times 1$ matrix consisting of $1's$ at the zeroth position and $0$ elsewhere, and $\Gamma_{2n}=\chi_1(\Gamma)$. Then by Lemma \ref{injective of chi}, we have $T-\Gamma T\Gamma^*=p\otimes e_0+e_0\otimes \psi$, i.e., $T$ is in $\TT_n[\bbH]. $
	\end{proof}
	\section{Product of Quaternion Toeplitz Matrices}
	The central technical result of the current section pertaining to the product of quaternion Toeplitz matrices is the subsequent Lemma.
	\begin{lem}\label{technical product lemma}
		Assume that $T=T(p,\psi)+p_0I$ and $U=T(q,\phi)+q_oI$ with $p_0,q_0\in\bbR$, then
		\begin{equation}\label{lemmaeq}TU-\Gamma TU\Gamma^*=p\otimes\phi-\tilde{\psi}\otimes\tilde{q}+[Tq+q_0p+p_0q_0e_0]\otimes e_0+e_0\otimes[\Gamma
			U^*\Gamma^*\psi+\overline{p_0}\phi].
		\end{equation}
	\end{lem}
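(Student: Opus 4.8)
The plan is to combine Lemma~\ref{sum lemma} and Proposition~\ref{Toeplitz lemma} in the following way. By Proposition~\ref{Toeplitz lemma} applied to $T$ and to $U$, we may write $T - \Gamma T \Gamma^* = p \otimes e_0 + e_0 \otimes \psi$ and $U - \Gamma U \Gamma^* = q \otimes e_0 + e_0 \otimes \phi$, where the vectors are as in the statement (with the zeroth component of the ``$\psi$'' and ``$\phi$'' parts adjusted so that the diagonal term $p_0$, $q_0$ is absorbed correctly; here $p_0, q_0 \in \bbR$ is exactly what makes the bookkeeping of the scalar part consistent). First I would record the identity $\Gamma^* \Gamma = I - e_0 \otimes e_0$ (the projection killing the zeroth coordinate), which is immediate from the shape of $\Gamma$, together with $\Gamma \Gamma^* = I - e_{n-1}\otimes e_{n-1}$; the first of these is the one that does the real work.

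Next I would compute $TU - \Gamma T U \Gamma^*$ directly by inserting $\Gamma^*\Gamma = I - e_0\otimes e_0$ between $T$ and $U$:
\begin{align*}
TU - \Gamma TU\Gamma^* &= TU - \Gamma T(\Gamma^*\Gamma + e_0\otimes e_0)U\Gamma^* \\
&= \big(T - \Gamma T\Gamma^*\big)U + \Gamma T\Gamma^*\big(U - \Gamma U \Gamma^*\big)\Gamma^*\Gamma \cdot(\cdots) - (\Gamma T e_0)\otimes(\Gamma U^*\Gamma^* e_0).
\end{align*}
More carefully: write $TU - \Gamma TU\Gamma^* = (T-\Gamma T\Gamma^*)U + \Gamma T\Gamma^* U - \Gamma TU\Gamma^*$, and in the last two terms factor out $\Gamma T$ on the left to get $\Gamma T(\Gamma^* U - U\Gamma^*)$... instead I would use the cleaner route $\Gamma T\Gamma^* U - \Gamma T U\Gamma^* = \Gamma T\Gamma^*(U - \Gamma U\Gamma^*) + \Gamma T\Gamma^*\Gamma U\Gamma^* - \Gamma T U\Gamma^* = \Gamma T\Gamma^*(U-\Gamma U\Gamma^*) - \Gamma T(e_0\otimes e_0)U\Gamma^*$, using $\Gamma^*\Gamma = I - e_0\otimes e_0$ once more. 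Now substitute $T - \Gamma T\Gamma^* = p\otimes e_0 + e_0\otimes\psi$ and $U - \Gamma U\Gamma^* = q\otimes e_0 + e_0\otimes\phi$, and use the elementary identities for composing rank-one operators with matrices, namely $A(u\otimes v) = (Au)\otimes v$ and $(u\otimes v)A = u\otimes(A^* v)$, where $\langle\cdot,\cdot\rangle$ is the inner product of Definition~\ref{tensor}. This turns every term into an explicit sum of rank-one tensors.

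The main obstacle — and the only genuinely delicate point — is the algebra of assembling these rank-one pieces into exactly the four terms on the right-hand side of~\eqref{lemmaeq}, keeping careful track of which quaternion multiplies a tensor on the left versus the right (since $\bbH$ is noncommutative, $q\otimes e_0$ hit on the left by $T$ gives $(Tq)\otimes e_0$, but scalars like $q_0$ and conjugates like $\overline{p_0}$ appear because the zeroth entries of the ``$\psi$'' and ``$\phi$'' vectors were modified). In particular the term $-\tilde\psi\otimes\tilde q$ arises from $(e_0\otimes\psi)(q\otimes e_0) = \langle q, ?\rangle$-type contractions producing a tensor built from $\psi$ and $q$, and I expect the tildes denote the appropriate conjugate/adjoint versions forced by Definition~\ref{tensor}; I would verify the constant $p_0q_0 e_0\otimes e_0$ and the $\Gamma U^*\Gamma^*\psi$ contribution by the identity $(u\otimes v)(x\otimes y) = \langle x,v\rangle\, u\otimes y$ applied to $(p\otimes e_0)(q\otimes e_0)$ and its cousins, together with $\Gamma T\Gamma^* = T - (p\otimes e_0 + e_0\otimes\psi)$ to re-express $\Gamma T\Gamma^*$ when it acts on $q\otimes e_0$. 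Finally I would collect terms: the ``honest'' Toeplitz part gives $p\otimes\phi - \tilde\psi\otimes\tilde q$, the left factors acting on $q$ give $[Tq + q_0 p + p_0 q_0 e_0]\otimes e_0$, and the adjoint factors acting on $\psi$ give $e_0\otimes[\Gamma U^*\Gamma^*\psi + \overline{p_0}\phi]$, which is precisely~\eqref{lemmaeq}. A sanity check at the end: specializing to $p=q=0$ should recover $T = p_0 I$, $U = q_0 I$ and both sides reduce to $p_0 q_0(I - \Gamma\Gamma^*) = p_0q_0\, e_{n-1}\otimes e_{n-1}$ versus $p_0 q_0 e_0\otimes e_0$ — so I would double-check the indexing convention on $\Gamma$ here, as this suggests the projection is actually $e_0\otimes e_0$ and hence $\Gamma$ is the \emph{forward} shift in their convention despite the picture; that reconciliation is worth stating explicitly in the proof.
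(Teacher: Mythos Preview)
Your overall strategy---reduce to the rank-two description from Proposition~\ref{Toeplitz lemma}, insert a projection identity between the two factors, and then push rank-one tensors around using $A(u\otimes v)=(Au)\otimes v$ and $(u\otimes v)A=u\otimes(A^*v)$---is exactly the paper's approach. The paper organizes it slightly differently: it first strips off the scalar parts by writing $T=\hat T+p_0I$, $U=\hat U+q_0I$ and expanding, so that the only nontrivial computation is $\hat T\hat U-\Gamma\hat T\hat U\Gamma^*$; this is what makes the appearance of $q_0p$, $p_0q_0e_0$, and $\overline{p_0}\phi$ transparent rather than something to be tracked by hand.

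There are, however, two concrete issues in your write-up. First, you have the two projections reversed: with $\Gamma$ the subdiagonal shift one has $\Gamma\Gamma^*=I-e_0\otimes e_0$ and $\Gamma^*\Gamma=I-e_{n-1}\otimes e_{n-1}$, not the other way around. Your sanity check actually confirms this (both sides equal $p_0q_0\,e_0\otimes e_0$ once the correct identity is used), so the ``reconciliation'' is not a convention problem but a sign that your inserted identity $\Gamma^*\Gamma=I-e_0\otimes e_0$ is wrong and should be $\Gamma^*\Gamma=I-e_{n-1}\otimes e_{n-1}$. Second, the term $-\tilde\psi\otimes\tilde q$ does not come from a contraction like $(e_0\otimes\psi)(q\otimes e_0)$; it arises precisely from the $e_{n-1}\otimes e_{n-1}$ correction, namely $-\Gamma\hat T(e_{n-1}\otimes e_{n-1})\hat U\Gamma^*=-(\Gamma\hat T e_{n-1})\otimes(\Gamma\hat U^*e_{n-1})$, and $\tilde\psi$, $\tilde q$ are by definition these last-column vectors shifted by $\Gamma$ (reversed/conjugated copies of $\psi$ and $q$). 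Once you swap the projection identities and track this $e_{n-1}$ term explicitly, your outline becomes the paper's proof.
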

	\begin{proof}
		Let us denote that $T(p,\psi) $ and $T(q,\phi) $ by $\hat{T}$ and $\hat{U}$ respectively. Then since $p_0, q_0\in \bbR$, then they must commute with every quaternion so,  we can express
		\begin{align}\label{tag}
			TU-\Gamma TU\Gamma^*
			&=[\hat{T}+p_0I][\hat{U}+q_0I]-\Gamma [\hat{T}+p_0I][\hat{U}+q_0I]\Gamma^*\notag\\
			&=\hat{T}\hat{U}+p_0\hat{U}+q_0\hat{T}+p_0q_0I-\Gamma\hat{T}\hat{U}\Gamma^*-\Gamma p_0\hat{U}\Gamma^*-\Gamma q_0\hat{T}\Gamma^*-p_0q_0\Gamma\Gamma^*\notag\\
			&=[\hat{T}\hat{U}- \Gamma\hat{T}\hat{U}\Gamma^*]+p_0[\hat{U}-\Gamma \hat{U}\Gamma^*]+q_0[\hat{T}-\Gamma \hat{T}\Gamma^*]+p_0 q_0[I-\Gamma\Gamma^*].
		\end{align}
		Since $\hat{U}$ and $\hat{T}$ are quaternion Toeplitz matrices, then it follows from Lemma \ref{Toeplitz lemma}, $\hat{U}-\Gamma\hat{U}\Gamma^*=q\otimes e_0+e_0\otimes \phi$, $\hat{T}-\Gamma\hat{T}\Gamma^*=p\otimes e_0+e_0\otimes \psi$. Also, note that $I-\Gamma\Gamma^*=e_0\otimes e_0$. Then the  equality \eqref{tag} above takes the form
		
		\begin{equation}\label{equation 1}
			TU-\Gamma TU\Gamma^*=\hat{T}\hat{U}+p_0[q\otimes e_0+e_0\otimes\phi]+q_0[p\otimes e_0+e_0\otimes\psi]+p_0q_0[e_0\otimes e_0].
		\end{equation}
		Now
		\begin{align}\label{tag1}
			\hat{T}\hat{U}-\Gamma \hat{T}\hat{U}\Gamma^*
			&=\hat{T}\hat{U}-\hat{T}\Gamma\hat{U}\Gamma^*+\hat{T}\Gamma\hat{U}\Gamma^*-\Gamma\hat{T}[\Gamma^*\Gamma+e_{n-1}\otimes e_{n-1}]\hat{U}\Gamma^*\notag\\
			&=\hat{T}[\hat{U}-\Gamma \hat{U}\Gamma^*]+[\hat{T}-\Gamma \hat{T}\Gamma^*]\hat{T}\Gamma\hat{U}\Gamma^*-\Gamma\hat{T}[e_{n-1}\otimes e_{n-1}]\hat{U}\Gamma^*\notag\\
			&=\hat{T}[q\otimes e_0+e_0\otimes\phi]+[p\otimes e_0+e_0\otimes\psi][\Gamma\hat{U}\Gamma^*]-\Gamma\hat{T}e_{n-1}\otimes \Gamma\hat{U}^*e_{n-1}\notag\\
			&=\hat{T}q\otimes e_0+p\otimes\phi+p\otimes \Gamma\hat{U^*}\Gamma^*e_0+e_0\otimes \Gamma\hat{U^*}\Gamma^*\psi-\tilde{\psi}\otimes\tilde{q}\notag\\
			&=\hat{T}q\otimes e_0+p\otimes\phi+e_0\otimes \Gamma\hat{U^*}\Gamma^*\psi-\tilde{\psi}\otimes\tilde{q}.
		\end{align}
		
		By combining \eqref{tag}, \eqref{equation 1}, and \eqref{tag1},  we  have arrived at 
		\begin{align*}
			TU-\Gamma TU\Gamma^*
			&=p\otimes \phi-\tilde{\psi}\otimes\tilde{q}+[\hat{T}q_0+p_0q+q_0p+p_0q_0e_0]\otimes e_0+\\
			&+e_0\otimes[\Gamma\hat{U}^*\Gamma^*\psi+\overline{p_0}\phi+\overline{q_0}\psi]\\
			&=p\otimes\phi-\tilde{\psi}\otimes\tilde{q}+[Tq+q_0p+p_0q_0e_0]\otimes e_0+e_0\otimes[\Gamma
			U^*\Gamma^*\psi+\overline{p_0}\phi].
		\end{align*}
		The proof is therefore complete.
	\end{proof}
	
	The ensuing theorem delineates the conditions that are both required and adequate for the product $TU-VW$ of quaternion Toeplitz matrices $T$, $U$, $V$, and $W$ retain its structure as a quaternion Toeplitz matrix. 
	\begin{thm}\label{producttoeplitz} 
		Suppose that $T=T(p,\psi)+p_0I$, $U=T(q,\phi)+q_0I$, $V=T(r,\lambda)+r_0I$, and $W=T(s,\mu)+s_0I.$ where, $p_0,q_0,r_0,s_0\in \bbR$, then allowing are equivalent
		\begin{itemize}
			\item[(i)] $TU-VW$ is in $\TT_n[\bbH]$.
			\item[(ii)]  $p\otimes\phi-\tilde{\psi}\otimes\tilde{q}=r\otimes\mu-\tilde{\lambda}\otimes\tilde{s}$. 
		\end{itemize}
	\end{thm}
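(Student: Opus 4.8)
The plan is to apply Lemma~\ref{technical product lemma} twice, once to the pair $(T,U)$ and once to the pair $(V,W)$, and then subtract. Writing out \eqref{lemmaeq} for each product gives
\begin{align*}
TU-\Gamma TU\Gamma^*&=p\otimes\phi-\tilde{\psi}\otimes\tilde{q}+[Tq+q_0p+p_0q_0e_0]\otimes e_0+e_0\otimes[\Gamma U^*\Gamma^*\psi+\overline{p_0}\phi],\\
VW-\Gamma VW\Gamma^*&=r\otimes\mu-\tilde{\lambda}\otimes\tilde{s}+[Vs+s_0r+r_0s_0e_0]\otimes e_0+e_0\otimes[\Gamma W^*\Gamma^*\lambda+\overline{r_0}\mu].
\end{align*}
Subtracting, the two ``rank-one at $e_0$'' terms of the form $(\cdots)\otimes e_0$ and $e_0\otimes(\cdots)$ combine into a single expression of the shape $a\otimes e_0+e_0\otimes b$ with
$$a=Tq+q_0p+p_0q_0e_0-Vs-s_0r-r_0s_0e_0,\qquad b=\Gamma U^*\Gamma^*\psi+\overline{p_0}\phi-\Gamma W^*\Gamma^*\lambda-\overline{r_0}\mu.$$
Hence
$$
(TU-VW)-\Gamma(TU-VW)\Gamma^*=\bigl[p\otimes\phi-\tilde{\psi}\otimes\tilde{q}-r\otimes\mu+\tilde{\lambda}\otimes\tilde{s}\bigr]+a\otimes e_0+e_0\otimes b.
$$

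Now I would invoke Proposition~\ref{Toeplitz lemma}: a matrix $S\in\MM_n[\bbH]$ is Toeplitz if and only if $S-\Gamma S\Gamma^*$ is of the form $x\otimes e_0+e_0\otimes y$ for some $x,y\in\bbH^n$. The term $a\otimes e_0+e_0\otimes b$ is already of exactly this form, so $(TU-VW)-\Gamma(TU-VW)\Gamma^*$ has the required form precisely when the leftover term $p\otimes\phi-\tilde{\psi}\otimes\tilde{q}-r\otimes\mu+\tilde{\lambda}\otimes\tilde{s}$ can be absorbed into it, i.e. when it too is of the form $x\otimes e_0+e_0\otimes y$. This immediately gives the implication $(ii)\Rightarrow(i)$: if $p\otimes\phi-\tilde{\psi}\otimes\tilde{q}=r\otimes\mu-\tilde{\lambda}\otimes\tilde{s}$ then the leftover vanishes, so $(TU-VW)-\Gamma(TU-VW)\Gamma^*=a\otimes e_0+e_0\otimes b$ and Proposition~\ref{Toeplitz lemma} yields $TU-VW\in\TT_n[\bbH]$.

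For the converse $(i)\Rightarrow(ii)$ the point I expect to be the main obstacle is showing that the difference $D:=p\otimes\phi-\tilde{\psi}\otimes\tilde{q}-r\otimes\mu+\tilde{\lambda}\otimes\tilde{s}$, which a priori is merely known to be expressible as $x\otimes e_0+e_0\otimes y$ (after moving $a\otimes e_0+e_0\otimes b$ to the other side), is actually forced to be zero. Here I would examine the matrix $D$ entrywise. A rank-one operator $u\otimes v$ has $(i,j)$ entry $u_i\overline{v_j}$, so $D$ has $(i,j)$ entry $p_i\overline{\phi_j}-\tilde\psi_i\overline{\tilde q_j}-r_i\overline{\mu_j}+\tilde\lambda_i\overline{\tilde s_j}$, where $\tilde\psi,\tilde q,\tilde\lambda,\tilde s$ denote the appropriate flipped/conjugated vectors coming from the notation $\tilde{(\cdot)}$ used in Lemma~\ref{technical product lemma}. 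On the other hand $x\otimes e_0+e_0\otimes y$ is supported only on the zeroth row and zeroth column. Comparing, $D$ must vanish on the $(n-1)\times(n-1)$ lower-right block, i.e. $D_{ij}=0$ for all $i,j\geq 1$. The key structural observation is that the flip operation sends the index $i$ to $n-i$ (roughly), so that the vanishing of $D$ on the lower-right block is in fact equivalent to the vanishing of $D$ everywhere: every entry $D_{i,j}$ with $i$ or $j$ equal to $0$ coincides, up to conjugation and the flip, with some entry $D_{i',j'}$ with $i',j'\geq 1$ that already lies in the block we control — this is precisely the mechanism Proposition~\ref{Toeplitz lemma} exploited, and the same bookkeeping applies here because the four terms of $D$ are each built from the same flip/conjugation pattern. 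Once $D=0$ is established, unwinding the definitions of $\tilde\psi,\tilde q,\tilde\lambda,\tilde s$ rewrites $D=0$ as the stated identity $p\otimes\phi-\tilde{\psi}\otimes\tilde{q}=r\otimes\mu-\tilde{\lambda}\otimes\tilde{s}$, completing the proof. The only genuinely delicate bit is tracking the index-flip/conjugation conventions consistently; everything else is the two applications of Lemma~\ref{technical product lemma} together with Proposition~\ref{Toeplitz lemma}.
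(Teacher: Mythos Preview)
Your overall architecture is the same as the paper's: apply Lemma~\ref{technical product lemma} to each product, subtract, and then invoke Proposition~\ref{Toeplitz lemma}. The direction $(ii)\Rightarrow(i)$ is fine and matches the paper exactly.

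The direction $(i)\Rightarrow(ii)$ is where you overcomplicate things and the argument, as written, does not go through. You correctly deduce that $D:=p\otimes\phi-\tilde\psi\otimes\tilde q-r\otimes\mu+\tilde\lambda\otimes\tilde s$ must be of the form $x\otimes e_0+e_0\otimes y$, hence vanishes on the lower-right $(n-1)\times(n-1)$ block. But your proposed mechanism for killing the zeroth row and column --- that ``every entry $D_{i,j}$ with $i$ or $j$ equal to $0$ coincides, up to conjugation and the flip, with some entry $D_{i',j'}$ with $i',j'\geq1$'' --- is not correct. The entry $D_{0,j}$ involves the zeroth components $p_0,\tilde\psi_0,r_0,\tilde\lambda_0$, and the flip $i\mapsto n-i$ would send $0$ out of range, not into the controlled block; there is no symmetry of $D$ doing what you claim.

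The paper's argument here is a one-line observation you missed: in the decomposition $T=T(p,\psi)+p_0I$ the diagonal has been stripped off, so the vectors $p,\psi,q,\phi,r,\lambda,s,\mu$ all have zeroth coordinate $0$; and since $\tilde\psi=\Gamma\hat T e_{n-1}$ (and similarly for the others) the tilded vectors also have zeroth coordinate $0$ because $\Gamma$ kills the top entry. Hence every one of the eight vectors appearing in $D$ has vanishing zeroth component, so $D$ has its zeroth row \emph{and} zeroth column identically zero. Combined with $D$ being supported on the zeroth row and column (from $D=x\otimes e_0+e_0\otimes y$), this forces $D=0$ immediately. Replace your flip paragraph with this observation and the proof is complete and identical to the paper's.
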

	\begin{proof}
		It is inferred from Lemma (\ref{technical product lemma}) that
		\begin{align}\label{Q2}
			TU-\Gamma TU\Gamma^*-VW+\Gamma VW \Gamma^*
			&\notag =\psi\otimes\phi-\tilde{\psi}\otimes\tilde{q}-r\otimes\mu+\tilde{\lambda}\otimes\tilde{s}+\\
			&\notag+
			[Tq+q_0p+p_0q_0e_0-
			Vs-s_0r-r_0s_0e_0]\otimes e_0\\
			&
			+e_0\otimes[\Gamma U^*\Gamma^*\psi+\overline{p}_0\phi-\Gamma W^*\Gamma^*\lambda-\overline{r}_0\mu].
		\end{align}
		The \eqref{Q2}'s right-hand first four terms clearly relate to vectors that include $0$ in the zeroth position. As a direct implication of Lemma \ref{Toeplitz lemma}, $TU-VW$ is in $\TT_n[\bbH]$ iff
		$$p\otimes\phi-\tilde{\psi}\otimes\tilde{q}-r\otimes\mu+\tilde{\lambda}\otimes\tilde{s}=0.$$
		This is precisely what we wanted to prove.
	\end{proof}
	The ensuing result articulates the conditions that are both required and appropriate under which the  $TU-VW$ vanishes, $T$, $U$, $V$, and $W$ being quaternion Toeplitz matrices. 
	\begin{thm}\label{productzero}
		Consider that $T=T(p,\psi)+p_0I$, $U=T(q,\phi)+q_0I$, $V=T(r,\lambda)+r_0I$, and $W=T(s,\mu)+s_0I$, where, $p_0,q_0,r_0,s_0\in \bbR$. Suppose further that $TU-VW$ is in $\TT_n[\bbH]$, then the below are analogous
		\begin{itemize}
			\item[(i)] $TU-VW$=0.\\
			\item[(ii)]\begin{equation}\label{Q3}Tq+q_0p+p_0q_0e_0=Vs+s_0r+r_0s_0e_0
			\end{equation}
			and
			\begin{equation}\label{Q1}U^*\psi+\overline{p}_0\phi+\overline{p}_0 \overline{q}_0e_0=W^*\lambda+\overline{r}_0\mu+\overline{r}_0\overline{s}_0e_0.
			\end{equation}
		\end{itemize}
	\end{thm}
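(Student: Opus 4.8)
The plan is to reduce the statement, via Lemma~\ref{sum lemma}, to the vanishing of $(TU-VW)-\Gamma(TU-VW)\Gamma^*$, for which Lemma~\ref{technical product lemma} has already done the heavy lifting. Concretely, since $p_0,q_0,r_0,s_0\in\bbR$, subtracting the two instances of \eqref{lemmaeq} for $TU$ and $VW$ gives (this is exactly \eqref{Q2} from the proof of Theorem~\ref{producttoeplitz})
\[
(TU-VW)-\Gamma(TU-VW)\Gamma^*=\big[p\otimes\phi-\tilde\psi\otimes\tilde q-r\otimes\mu+\tilde\lambda\otimes\tilde s\big]+\alpha\otimes e_0+e_0\otimes\beta,
\]
where $\alpha=Tq+q_0p+p_0q_0e_0-Vs-s_0r-r_0s_0e_0$ and $\beta=\Gamma U^*\Gamma^*\psi+\overline{p}_0\phi-\Gamma W^*\Gamma^*\lambda-\overline{r}_0\mu$. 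The hypothesis $TU-VW\in\TT_n[\bbH]$ forces, by Theorem~\ref{producttoeplitz}, the bracketed block to vanish, so that $(TU-VW)-\Gamma(TU-VW)\Gamma^*=\alpha\otimes e_0+e_0\otimes\beta$; hence, by Lemma~\ref{sum lemma}, the equation $TU-VW=0$ is equivalent to $\alpha\otimes e_0+e_0\otimes\beta=0$.

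Next I would analyze when a matrix of the form $\alpha\otimes e_0+e_0\otimes\beta$ vanishes. By the explicit description in \eqref{basis matrix}, such a matrix is supported on its first row and first column: its entries in positions $(i,0)$ and $(0,j)$ for $i,j\ge 1$ are the corresponding coordinates of $\alpha$ and $\beta$, while its $(0,0)$ entry is $\alpha_0+\beta_0$. Hence it is zero precisely when $\alpha$ and $\beta$ vanish in all coordinates indexed by $1,\dots,n-1$ and $\alpha_0+\beta_0=0$. The decisive observation is that the zeroth coordinate of $\beta$ is automatically $0$: left multiplication by $\Gamma$ annihilates the zeroth coordinate of any vector, and $\phi,\mu$ have vanishing zeroth coordinate. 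Consequently $\alpha\otimes e_0+e_0\otimes\beta=0$ if and only if $\alpha=0$ and $\beta=0$, and the condition $\alpha=0$ is verbatim \eqref{Q3}.

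It then remains to recast $\beta=0$ as \eqref{Q1}. Using $\Gamma\Gamma^*=I-e_0\otimes e_0$ together with $U^*=T(q,\phi)^*+\overline{q}_0 I$, one writes $\Gamma U^*\Gamma^*\psi=U^*\psi-(U^*-\Gamma U^*\Gamma^*)\psi$, and since (by Proposition~\ref{Toeplitz lemma} applied to the Toeplitz matrix $U$, or by direct inspection) $U^*-\Gamma U^*\Gamma^*$ is supported on its first row and column, the vector $(U^*-\Gamma U^*\Gamma^*)\psi$ is a scalar multiple of $e_0$; the same holds for $(W^*-\Gamma W^*\Gamma^*)\lambda$. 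Substituting, $\beta$ and the difference of the two sides of \eqref{Q1} differ only by a multiple of $e_0$, so — using that $\beta$ has zeroth coordinate $0$ and invoking \eqref{Q3} to pin down that multiple — the equation $\beta=0$ is equivalent to \eqref{Q1}; combining with the previous paragraph yields the stated equivalence. I expect the genuine obstacle to be precisely this last reconciliation: carefully tracking which zeroth coordinates are forced to vanish and how the real scalars $p_0,q_0,r_0,s_0$ feed into the $e_0$-coefficients is the only point that is not a direct formal consequence of Lemmas~\ref{sum lemma} and~\ref{technical product lemma} and Theorem~\ref{producttoeplitz}.
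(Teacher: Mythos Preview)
Your proposal is correct and follows essentially the same route as the paper: reduce via Lemma~\ref{sum lemma} to the vanishing of $(TU-VW)-\Gamma(TU-VW)\Gamma^*$, invoke Theorem~\ref{producttoeplitz} to kill the rank-two block, and then argue that $\alpha\otimes e_0+e_0\otimes\beta=0$ forces $\alpha=0$ (which is \eqref{Q3}) and $\beta=0$ (the auxiliary equation \eqref{Q4} in the paper), with the final step being to show that $\beta=0$ is equivalent to \eqref{Q1} modulo a multiple of $e_0$ that is pinned down by the zeroth component of \eqref{Q3}. Your explicit observation that $\beta_0=0$ automatically (because $\Gamma$ kills zeroth coordinates and $\phi_0=\mu_0=0$) is a detail the paper glosses over when it asserts that the tensor-product vectors must individually vanish; otherwise the two arguments are the same in structure and in the identification of the ``genuine obstacle.''
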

	\begin{proof}
		Because $TU-VW$ is in $\TT_n[\bbH]$, then $TU-VW=0\iff (TU-VW)-\Gamma (TU-VW) \Gamma^*$=0.
		The preceding equation is valid if and only if the vectors involved in the tensor products with $e_0$ in \eqref{Q2} of the proof of Theorem \ref{producttoeplitz} are $0$. That is $TU=VW$ iff
		$$
		Tq+q_0p +p_0q_0e_0=Vs+s_0r+r_0s_0e_0,
		$$
		\begin{equation}\label{Q4}
			\Gamma U^*\Gamma^*\psi+\overline{p}_0\phi=\Gamma W^*\Gamma^*\lambda+\overline{r}_0\mu.   
		\end{equation}
		Nevertheless, \eqref{Q3} and \eqref{Q1} are equivalent to the preceding two equations, as 
		$$
		\Gamma T\Gamma^* U^*\psi+\overline{p}_0\overline{q}_0e_0=\Gamma T\Gamma^* W^*\lambda+\overline{r}_0\overline{s}_0e_0.
		$$
		is derived from the subtraction of \eqref{Q4} from \eqref{Q1}. 
		As implied by Lemma \ref{Toeplitz lemma}, this is identical as
		$$<\psi,q>e_0+\overline{p}_0\overline{q}_0e_0=<\lambda,s>e_0+\overline{r}_0\overline{s}_0e_0.$$
		Up to complex conjugation, this corresponds to the zeroth component relation of \eqref{Q3}.
	\end{proof}
	The below result elucidates when the product of two non-zero quaternion Toeplitz matrices is zero. 
	\begin{thm}
		Suppose that $T=T(p,\psi)+p_0I$ and  $U=T(q,\phi)+q_0I$ be nonzero. 
		\begin{itemize}
			\item[(i)] If $p$ and $\tilde{\psi}$ are linearly independent, consequently, $TU=0$ suggests that $U=0$.
			
			\item [(ii)] If $\psi=0$, then $TU=0$ implies that either $U=0$ or $p_0=q_0=0$ and $\phi=0$.
			
			In the previously considered case, $TU=0$ if and only if either
			$p=\begin{pmatrix}
				0 \\
				p_1 \\
				\vdots \\
				p_{n-1}
			\end{pmatrix}$ and 
			$q=\begin{pmatrix}
				0 \\
				\vdots \\
				0 \\
				q_{n-1}
			\end{pmatrix}$
			or
			$q=\begin{pmatrix}
				0 \\
				q_1 \\
				\vdots \\
				q_{n-1}
			\end{pmatrix}$
			and
			$p=\begin{pmatrix}
				0 \\
				\vdots \\
				0 \\
				p_{n-1}
			\end{pmatrix}.$
			
			Likewise, if $p=0$, then $TU=0$, and $U\neq 0$ implies that $p_0=q_0=0$ and $q=0.$ In this case, $TU=0\iff$ either
			$\psi=\begin{pmatrix}
				0 \\
				\psi_1 \\
				\vdots \\
				\psi_{n-1}
			\end{pmatrix}$
			and
			$\phi=\begin{pmatrix}
				0 \\
				0 \\
				\vdots \\
				\phi_{n-1}
			\end{pmatrix}$
			or
			$\phi=\begin{pmatrix}
				0 \\
				\phi_1 \\
				\vdots \\
				\phi_{n-1}
			\end{pmatrix}$
			and
			$\psi=\begin{pmatrix}
				0 \\
				0 \\
				\vdots \\
				\psi_{n-1}
			\end{pmatrix}$.
			\item[(iii)] If $p=\lambda{\tilde\psi}$ for some $\lambda\in\bbH$, then $TU=0$ if and only if $q=\lambda{\tilde\phi}$ and 
			\begin{equation}\label{Q5}
				Tq+q_0p+p_0q_0e_0=0.
			\end{equation}
		\end{itemize}
	\end{thm}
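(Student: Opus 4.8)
The plan is to collapse the whole statement to three coordinate equations---those furnished by Theorems~\ref{producttoeplitz} and \ref{productzero} when $V=W=0$---and then to substitute each structural hypothesis on $(p,\psi)$ into the first of them. Since $0\in\TT_n[\bbH]$, $TU=0$ forces $TU\in\TT_n[\bbH]$, so Theorem~\ref{producttoeplitz} (with $r=\lambda=s=\mu=0$, $r_0=s_0=0$) gives the \emph{Toeplitz equation} $p\otimes\phi=\tilde\psi\otimes\tilde q$, and Theorem~\ref{productzero} then shows $TU=0$ is equivalent to this together with the \emph{column equation} $Tq+q_0p+p_0q_0e_0=0$ (i.e.\ \eqref{Q3}) and the \emph{row equation} $U^*\psi+\overline p_0\phi+\overline p_0\overline q_0e_0=0$ (i.e.\ \eqref{Q1}), all specialised to $V=W=0$; conversely the Toeplitz equation by itself forces $TU\in\TT_n[\bbH]$, so $TU=0$ is exactly the conjunction of the three. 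Throughout I use that column~$j$ of a rank-one operator $x\otimes y$ is a scalar multiple of $x$ (Definition~\ref{tensor}) and that $\bbH$ has no zero divisors.

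\emph{Part (i).} Comparing $j$-th columns in the Toeplitz equation gives, for each $j$, a relation $c_jp+d_j\tilde\psi=0$ with $c_j,d_j\in\bbH$; linear independence of $p$ and $\tilde\psi$ forces all $c_j=d_j=0$, which says $\phi=0$ and $\tilde q=0$, hence $q=0$ (the passage $v\mapsto\tilde v$ is a reversal of coordinates followed by conjugation, so injective). Then $U=T(0,0)+q_0I=q_0I$, and $0=TU=q_0T$ with $T\ne0$, $q_0\in\bbR$ forces $q_0=0$; thus $U=0$.

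\emph{Part (ii).} Here $\tilde\psi=0$, so the Toeplitz equation reads $p\otimes\phi=0$, whence $p=0$ or $\phi=0$. If $\phi=0$, then $T$ and $U$ are lower-triangular Toeplitz matrices, $T=\sum_{k=0}^{n-1}p_k\Gamma^k=P(\Gamma)$ and $U=Q(\Gamma)$ for $P,Q\in\bbH[x]$ ($p_0$ the diagonal entry); since $I,\Gamma,\dots,\Gamma^{n-1}$ are $\bbH$-independent and $\Gamma^n=0$, $TU=0$ is equivalent to $PQ\equiv0\pmod{x^n}$. As $\bbH$, and hence $\bbH[x]$, has no zero divisors, the lowest-order term of $PQ$ is $p_aq_bx^{a+b}\ne0$ with $a=\ord P$, $b=\ord Q$, so $TU=0\iff a+b\ge n$. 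Since $T,U\ne0$ we have $P,Q\ne0$ and $a,b\le n-1$, so $a+b\ge n$ forces $a\ge1$, $b\ge1$, i.e.\ $p_0=q_0=0$---the first assertion (the clause ``$U=0$'' covering the case $Q=0$ excluded by $U\ne0$). For the explicit description, each displayed pair $(p,q)$ plainly has $a+b\ge n$, hence $TU=0$, and conversely $1\le a,b\le n-1$ with $a+b\ge n$ pins down precisely the listed supports. If instead $p=0$ (with $\psi=0$), then $T=p_0I$, $TU=p_0U$, and $U\ne0$ forces $p_0=0$, i.e.\ $T=0$. Finally the statement under the hypothesis $p=0$ is the adjoint of the case just handled: $TU=0\iff U^*T^*=0$, and for $p=0$ both $T^*$, $U^*$ are lower-triangular Toeplitz, so applying the previous argument to $(U^*,T^*)$ and translating back (conjugation, coordinate reversal) gives the asserted description of $\psi$ and $\phi$.

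\emph{Part (iii).} Suppose $p=\lambda\tilde\psi$. The case $\tilde\psi=0$ means $p=\psi=0$ and falls under (ii); so assume $\tilde\psi\ne0$. Substituting $p=\lambda\tilde\psi$ turns the right-hand side of the Toeplitz equation into a rank at most $1$ operator aligned with $\tilde\psi$, and matching the nonzero $\tilde\psi$-component shows the equation holds iff each $\tilde q_j$ is the same $\lambda$-twisted multiple of $\phi_j$---which, after reversing and conjugating coordinates, is exactly the relation $q=\lambda\tilde\phi$. Thus once $q=\lambda\tilde\phi$, $TU=0$ is equivalent to the column equation \eqref{Q5} together with the row equation. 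The heart of the matter is that under $p=\lambda\tilde\psi$ and $q=\lambda\tilde\phi$ the row equation already follows from \eqref{Q5}: the substitution $(T,p,q,p_0,q_0)\leftrightarrow(U^*,\phi,\psi,q_0,p_0)$ carries the column equation to the row equation and fixes the hypothesis pair $\{p=\lambda\tilde\psi,\ q=\lambda\tilde\phi\}$, once one uses that $T^*$, $U^*$ are Toeplitz with the sub- and super-diagonal data of $T$, $U$ interchanged, together with the identities $\hat T-\Gamma\hat T\Gamma^*=p\otimes e_0+e_0\otimes\psi$ and $\hat U-\Gamma\hat U\Gamma^*=q\otimes e_0+e_0\otimes\phi$---essentially the manipulation by which \eqref{Q4} was converted into \eqref{Q1} in the proof of Theorem~\ref{productzero}. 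Hence $TU=0\iff q=\lambda\tilde\phi$ and \eqref{Q5}. I expect this last reconciliation of the row and column equations under the $\lambda$-alignment to be the main obstacle; the rest is rank-one linear algebra over $\bbH$ and the no-zero-divisor property.
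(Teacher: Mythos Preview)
Your argument for (i) is essentially the paper's. For (ii) the paper computes $T(p,0)q$ entry by entry and case-splits on whether $p_1\ne0$, while you recast both lower-triangular matrices as polynomials in $\Gamma$ and extract the criterion $\ord P+\ord Q\ge n$; this is a genuinely different and tidier route. For (iii) the paper gives no proof at all (declaring it ``quite easy''), so your sketch already goes beyond the original.

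There is, however, a real error in your handling of (ii). Your order criterion $a+b\ge n$ is the correct necessary and sufficient condition for $TU=0$ in the lower-triangular case, but the assertion that it ``pins down precisely the listed supports'' is false. The two displayed alternatives in the statement amount to $\max(a,b)=n-1$, which is strictly stronger than $a+b\ge n$ once $n\ge4$: for $n=5$ take $p=(0,0,p_2,p_3,p_4)^t$ with $p_2\ne0$ and $q=(0,0,0,q_3,q_4)^t$ with $q_3\ne0$; then $a+b=5\ge n$ so $TU=0$, yet neither listed form is met. Your polynomial argument therefore proves something different from (and sharper than) the stated ``if and only if''; you should not claim the match. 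The paper's own proof, incidentally, only treats the subcase $p_1\ne0$, where $a=1$ forces $b\ge n-1$, and is silent on the general situation.

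For (iii), two steps need more care over $\bbH$. First, deducing $q=\lambda\tilde\phi$ from $(\lambda\tilde\psi)\otimes\phi=\tilde\psi\otimes\tilde q$ by ``matching the $\tilde\psi$-component'' is delicate: $\lambda$ sits to the left of $\tilde\psi$ and need not commute with the individual entries $\tilde\psi_i$, so cancelling a nonzero $\tilde\psi_i$ does not immediately isolate $\lambda$ on the correct side; you must work with the precise convention of Definition~\ref{tensor} rather than commutative intuition. Second, the ``substitution'' you invoke to transport the column equation \eqref{Q5} to the row equation is asserted but not exhibited; as you yourself flag, this is the crux, and a concrete verification rather than a symmetry slogan is required.
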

	\begin{proof}
		We give the proof of (i) and (ii), as (iii) is quite easy for the reader. 
		By the Theorem \ref{productzero} that if $TU=0$ then, $$p\otimes\phi-\tilde{\psi}\otimes\tilde{q}=0.$$
		
		(i) If $p$ and $\tilde{\psi}$ are linearly independent, then $\phi=\tilde{q}=0.$ Therefore $U=q_0I$. Consequently, $TU=0$ suggests that $U=0.$\\
		(ii) 
		Either $p=0$ or $\phi=0$ if $\psi=0$. Since $p=0$, $T=p_0I$, and $TU=0$ in this instance, $U=0.$ Assume, therefore, that $\phi=0.$
		$T$ and $U$ are both lower triangular, in other words. $T$ or $U$ is invertible if either $p_0$ or $q_0$ is not zero.

		Thus $p_0=q_0=0.$
		$$T(p,0)q=\begin{pmatrix}
			0 & 0 & 0 & \cdots & 0 \\
			p_1 & 0 & 0 & \cdots & 0 \\
			p_2 & p_1 & \ddots & \ddots & \vdots \\
			\vdots & \ddots & \ddots & \ddots & 0 \\
			p_{n-1} & p_{n-2} & \cdots & p_1 & 0
		\end{pmatrix}
		\begin{pmatrix}
			0 \\
			q_1 \\
			q_2 \\
			\vdots \\
			q_{n-1}
		\end{pmatrix}
		=\begin{pmatrix}
			& 0 \\
			& 0 \\
			& p_1q_1 \\
			& \vdots \\
			p_{n-2}q_1+\cdots +& p_1q_{n-2}
		\end{pmatrix}.$$
		The first pair of equations is represented by $T(p,0)=0$ for $p_1\neq 0,$ which implies that $q_i$ vanishes for $1 \leq i\leq n-2$. The second set of equations is represented by $q_i\neq 0$; the proof for $p=0$ is comparable.
	\end{proof}
	\section{Classification of Normal Toeplitz Matrices with Commuting Quaternion Entries}
	Normal matrices occupy a singular place in the space of matrices due to their commutativity with their adjoint. We take the matrices entries from $\HH$ in this section. The classification of normal Toeplitz matrices with elements from $\HH$ will then be presented. 
	
	We begin with the lemma, which can be simply proved by mathematical induction.
	
	\begin{lem}\label{normal}
		If $T=(p_{i,j})_{i,j=0}^{n-1}\in\MM_{n}[\HH]$, then $T$ is normal with respect to $\kappa^{th}$ adjoint if and only if
		\[
		\sum_{0\leq k\leq n-1}[p_{k,\ell}^{(\kappa)}p_{k,\ell}-p_{\ell,k}p_{\ell,k}^{(\kappa)}]=0\quad\hbox{whenever }\quad \ell = 0,1 ,\cdots, n-1,\quad\hbox{where}\quad k\neq \ell\]
		and
		\[\sum_{0\leq k\leq n-1 }[p_{k,i}^{(\kappa)}p_{k,j}-p_{i,k}p_{j,k}^{(\kappa)}]=0 \quad\hbox{whenever}\quad  0\leq i<j\leq n-1.
		\]	
	\end{lem}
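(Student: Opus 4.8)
The plan is to prove the equivalence by a direct computation of the entries of $TT^{\dagger\kappa}$ and $T^{\dagger\kappa}T$, followed by comparison. Recall that $T^{\dagger\kappa}=(T^{(\kappa)})^{t}$, so its $(k,\ell)$ entry is $p_{\ell,k}^{(\kappa)}$. Hence for all $0\le i,\ell\le n-1$,
\[
(TT^{\dagger\kappa})_{i,\ell}=\sum_{0\le k\le n-1}p_{i,k}\,p_{\ell,k}^{(\kappa)},\qquad
(T^{\dagger\kappa}T)_{i,\ell}=\sum_{0\le k\le n-1}p_{k,i}^{(\kappa)}\,p_{k,\ell},
\]
and $T$ is normal with respect to the $\kappa^{\mathrm{th}}$ adjoint precisely when these coincide for every pair $(i,\ell)$.

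First I would handle the diagonal: setting $i=\ell$ and equating the two displays gives $\sum_{0\le k\le n-1}\bigl[p_{k,\ell}^{(\kappa)}p_{k,\ell}-p_{\ell,k}p_{\ell,k}^{(\kappa)}\bigr]=0$ for each $\ell$. The term $k=\ell$ equals $p_{\ell,\ell}^{(\kappa)}p_{\ell,\ell}-p_{\ell,\ell}p_{\ell,\ell}^{(\kappa)}$, which vanishes because $\HH$ is commutative, so the summation may be restricted to $k\neq\ell$; this is the first asserted identity. For the off-diagonal entries, equating the displays at a pair $i\neq j$ yields $\sum_{0\le k\le n-1}\bigl[p_{k,i}^{(\kappa)}p_{k,j}-p_{i,k}p_{j,k}^{(\kappa)}\bigr]=0$. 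It then remains only to see that it is enough to impose this for $i<j$. The key point is that $TT^{\dagger\kappa}$ and $T^{\dagger\kappa}T$ are each equal to their own $\kappa^{\mathrm{th}}$ adjoint: using $(AB)^{\dagger\kappa}=B^{\dagger\kappa}A^{\dagger\kappa}$ together with $(T^{\dagger\kappa})^{\dagger\kappa}=T$ (which follows from $(T^{(\kappa)})^{t}=(T^{t})^{(\kappa)}$ and $(T^{(\kappa)})^{(\kappa)}=T$, recorded in the previous theorem), one gets $(TT^{\dagger\kappa})^{\dagger\kappa}=(T^{\dagger\kappa})^{\dagger\kappa}T^{\dagger\kappa}=TT^{\dagger\kappa}$, and likewise for $T^{\dagger\kappa}T$. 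Consequently $D:=TT^{\dagger\kappa}-T^{\dagger\kappa}T$ satisfies $d_{i,j}=d_{j,i}^{(\kappa)}$; since $\kappa$-conjugation is a bijection of $\HH$, $d_{i,j}=0\iff d_{j,i}=0$, so $D=0$ is equivalent to the vanishing of $d_{\ell,\ell}$ for all $\ell$ together with that of $d_{i,j}$ for $i<j$ only, which is exactly the stated system.

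I expect the only real care to be index bookkeeping: writing the two matrix products entrywise in the correct order and repeatedly invoking $(T^{(\kappa)})^{(\kappa)}=T$ and the commutation of the $\kappa$-conjugate with transposition, all of which are available from the properties stated above. The commutativity of $\HH$ plays a double role here: it makes the diagonal term $k=\ell$ drop out, and it is what permits manipulating these entrywise sums without tracking the order of factors. Should one prefer the inductive presentation hinted at in the statement, the inductive step on $n$ reduces, after deleting the last row and column of $T$, to precisely the same entry comparison, so I would present the direct computation above.
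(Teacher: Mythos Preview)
Your argument is correct. The paper itself does not spell out a proof of this lemma; it only remarks that it ``can be simply proved by mathematical induction'' and moves on. You instead give a direct entrywise computation of $T^{\dagger\kappa}T-TT^{\dagger\kappa}$, identify the two displayed conditions as the vanishing of its diagonal and strictly upper-triangular entries, and then use the $\kappa$-self-adjointness of the commutator (via $(AB)^{\dagger\kappa}=B^{\dagger\kappa}A^{\dagger\kappa}$ and $(T^{\dagger\kappa})^{\dagger\kappa}=T$) to explain why $i<j$ suffices. This is more transparent than an induction on $n$ and isolates exactly where commutativity of $\HH$ is used (the cancellation of the $k=\ell$ summand on the diagonal); your closing remark that the inductive step would in any case reduce to the same entry comparison is well taken. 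One small quibble: commutativity is not what lets you ``manipulate these entrywise sums without tracking the order of factors''---the order is dictated by the definition of the matrix product and you never reorder a product within the argument---so that second claimed role of commutativity is overstated, though harmless for the proof.
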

	The below is the primary result of this section as it deals with characterizing normal Toeplitz matrices. 
	\begin{thm}\label{normal th100}
		If $T = T(p,\psi)+p_0I\in \TT_n[\HH]$,
		then $T^{\dagger \kappa}T-TT^{\dagger \kappa}=0$ iff for each $k$ and $s$, with $1\leq s,k\leq n-1,$
		\begin{equation}\label{condition}
			p_sp_k^{(\kappa)} +p_{n-s}^{(\kappa)}p_{n-k}=\psi_{s}\psi^{(\kappa)}_{k}+\psi^{(\kappa)}_{n-s}\psi_{n-k}.
		\end{equation}
	\end{thm}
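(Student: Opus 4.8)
The plan is to reduce the normality condition to a pair of concrete entrywise identities by unwinding the definition of the $\kappa^{th}$ adjoint and then exploiting the Toeplitz structure. First I would write out $T = T(p,\psi)+p_0I$ explicitly, so that $p_{i,j} = p_{i-j}$ for $i>j$, $p_{i,i}=p_0$, and $p_{i,j} = \overline{\psi_{j-i}}$ — but here we are over $\HH$ rather than $\bbH$, so the off-diagonal upper entries should be read as the appropriate conjugate symbol; I would fix the convention that the $(i,j)$ entry for $i<j$ is $\psi_{j-i}^{(\kappa)}$ or $\psi_{j-i}$ consistent with the $\kappa^{th}$ adjoint being the relevant one. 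Then $T^{\dagger\kappa} = (T^{(\kappa)})^t$ has $(i,j)$ entry equal to $p_{j,i}^{(\kappa)}$. The key is to apply Lemma \ref{normal} with these specific entries.

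Next I would substitute the Toeplitz entries into the two families of equations from Lemma \ref{normal}. For the diagonal family (the sum over $k$ of $p_{k,\ell}^{(\kappa)}p_{k,\ell} - p_{\ell,k}p_{\ell,k}^{(\kappa)}$ for each fixed $\ell$), the terms with $k>\ell$ contribute products of the form $p_{k-\ell}^{(\kappa)}p_{k-\ell}$ from the first part and $p_{k-\ell}p_{k-\ell}^{(\kappa)}$-type terms from the second; the terms with $k<\ell$ contribute the $\psi$ analogues. Re-indexing the sums (letting $s = k-\ell$ run over the appropriate range and $s=\ell-k$ on the other side), the condition for fixed $\ell$ becomes a telescoping-type relation among the $p_s p_k^{(\kappa)}$ and $\psi_s \psi_k^{(\kappa)}$ terms. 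Comparing the equations for consecutive values of $\ell$ and taking differences should isolate exactly the identity \eqref{condition}: the pairing $p_s p_k^{(\kappa)} + p_{n-s}^{(\kappa)} p_{n-k}$ on the left arises from the "lower" contributions at one end and the reflected "upper" contributions, while the right side collects the $\psi$ terms. The off-diagonal family (the sum for $0\le i<j\le n-1$) should, after the same substitution and re-indexing, yield no new constraints beyond \eqref{condition} — or rather, it should yield exactly \eqref{condition} again for the various admissible $(s,k)$, since along a Toeplitz matrix the entries depend only on the difference of indices and the off-diagonal equations for different $(i,j)$ with the same $j-i$ coincide.

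For the converse direction, I would assume \eqref{condition} holds for all $1\le s,k\le n-1$ and run the substitution backwards: show that each of the two families of equations in Lemma \ref{normal} is a finite sum of instances of \eqref{condition} (or of \eqref{condition} together with its image under $s\mapsto n-s$, $k\mapsto n-k$, which is the same set of identities), hence all vanish, hence $T$ is normal with respect to the $\kappa^{th}$ adjoint. It is worth checking the boundary/degenerate cases — e.g. $\ell=0$ and $\ell=n-1$ in the diagonal family, where one of the two halves of the sum is empty — to make sure the indexing in \eqref{condition} (with indices $n-s$, $n-k$ genuinely in range $1,\dots,n-1$) is exactly what is produced.

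The main obstacle I anticipate is the bookkeeping of the re-indexing: the sums in Lemma \ref{normal} mix "below-diagonal" entries $p_{i-j}$ with "above-diagonal" entries (the $\psi$'s), and the reflection $s \leftrightarrow n-s$ in \eqref{condition} suggests that pairing a term at offset $s$ with a term at offset $n-s$ is essential — this typically comes from the fact that in an $n\times n$ matrix the column index $k$ ranges over a window whose two ends are at distance $s$ and $n-s$ from a fixed row. Getting the ranges of summation exactly right, handling the diagonal term $p_0$ (which is killed because $p_0 p_0^{(\kappa)} - p_0 p_0^{(\kappa)} = 0$ only if $p_0$ commutes with its conjugate, which holds in $\HH$ since $\HH$ is commutative), and verifying that the quadratic difference $p_{\ell,k}p_{\ell,k}^{(\kappa)}$ vs. $p_{\ell,k}^{(\kappa)}p_{\ell,k}$ really does collapse correctly using commutativity of $\HH$ — these are the places where an error would most likely creep in, and where I would be most careful. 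Since $\HH$ is commutative, many products can be freely reordered, which should make the collapse go through; the genuinely non-trivial content is purely combinatorial.
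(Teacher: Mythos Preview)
Your plan is correct and follows essentially the same route as the paper: apply Lemma~\ref{normal} to the Toeplitz entries, then take differences of the equations for consecutive indices (both in the diagonal family $u_{r,r}$ and in the off-diagonal family $u_{i,i+t}$) to isolate precisely the identities \eqref{condition}, and reverse the argument for the converse. Two minor points where the paper is more explicit than your sketch: the diagonal family yields only the $s=k$ instances of \eqref{condition} while the off-diagonal family supplies the remaining $s\neq k$ cases (so the latter is not redundant), and the paper splits into $n=2m$ and $n=2m+1$ to anchor the recursion at the ``middle'' index $u_{m,m}$, which is the base case your telescoping needs.
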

	\begin{proof}
		Suppose that $T^{\dagger \kappa}T-TT^{\dagger \kappa}=0$  and let $U=(u_{i,j})_{i,j=1}^n=T^{\dagger \kappa}T-TT^{\dagger \kappa}$. Since
		$T\in\TT_{n}[\HH]$, from Lemma \ref{normal} that $T^{\dagger \kappa}T-TT^{\dagger \kappa}=0$ precisely when for each  $1\leq r \leq n-1 $ and $1 \leq i < j \leq n ,$
		\begin{equation}\label{normal T}
			u_{r,r}=\sum_{1\leq k\leq r-1}\left[\psi_{k}\psi_k^{(\kappa)}-p_{k}^{(\kappa)}p_{k}\right]-\sum_{1\leq k\leq n-r}\left[\psi_{k}\psi^{(\kappa)}_{k}-p_{k}^{(\kappa)}p_{k}\right]=0
		\end{equation}
		and
		\begin{align}\label{normal T2} \notag u_{i,j}&=\sum_{1\leq k\leq i-1}\left[\psi_{k}\psi_{j-i+k}^{(\kappa)}-p_{k}p_{j-i+k}^{(\kappa)}\right] +\sum_{1\leq k\leq j-i-1}\left[p_{k}^{(\kappa)}\psi_{j-i-k}^{(\kappa)}-\overline{\psi_k}p_{j-i-k}^{(\kappa)}\right]	\\
			&+ 	\sum_{1\leq k\leq n-j}\left[p_{j-i+k}^{(\kappa)}p_{k}-\psi_{j-i+k}^{(\kappa)}\psi_{k}\right]=0
		\end{align}
		respectively. 
		Let us first evaluate equation (\ref{normal T}) solely on the case $n=2m$, with $m$ denoting a fixed positive integer. In doing so, we proceed to compute the diagonal entries of $U$ as
		\begin{equation}\label{normal Td}	
			u_{r,r} =\sum_{1\leq k \leq r-1}q_k-\sum_{1\leq k\leq 2m-r}q_k= -\left[ \sum_{1\leq k\leq (2m-r+1)-1}q_k-\sum_{1\leq k\leq 2m-(2m-r+1)}q_k \right] =-u_{2m-r+1,2m-r+1},
		\end{equation}
		for every $r$, where  $q_k=\psi_{k}\psi^{(\kappa)}_{k}-p_{k}^{(\kappa)}p_{k}.$ We have $u_{r,r}=-u_{2m-r+1,2m-r+1}$ for all $r$. Thus it is sufficient to focus on the entries $(r,r )$ of $U$,  $r=1,2,\cdots,m$. A simple and straightforward computation, based on 
		(\ref{normal Td}) shows that
		\begin{align*}u_{m,m}&=\sum_{1\leq k\leq m-1}q_k-\sum_{1\leq k\leq m}q_k=-q_m=\psi_{m}\psi^{(\kappa)}_{m}-p_{m}^{(\kappa)}p_{m}=0.
		\end{align*}
		By utilizing an iterative approach, for every $r=1,2,\cdots, m$, one can derive that
		\begin{align*}
			u_{r,r} &=\sum_{1\leq k\leq r-1}q_k-\sum_{1 \leq k\leq 2m-r}q_k= \sum_{1\leq k \leq (r+1)-1}q_k-\sum_{1 \leq k\leq 2m-(r+1)}q_k-(q_{r}+q_{2m-r})\\
			&= u_{r+1,r+1}-( q_{r}+q_{2m-r}).
		\end{align*}
		This leads to $q_{r}+q_{2m-r}=0$, for all $r=1,2,\cdots,m$. As a result, we arrive at 
		\begin{equation}\label{normal Td1}
			p_{r}^{(\kappa)}p_{r}+p_{2m-r}^{(\kappa)}p_{2m-r}=\psi_{r}\psi_{r}^{(\kappa)}+\psi_{2m-r}\psi_{2m-r}^{(\kappa)}.	
		\end{equation}
		We now focus on case $i<j$, rewriting (\ref{normal T2}) as follows
		\[u_{i,j} = \sum_{1\leq k\leq i-1}\mu_{t,k}+\sum_{1\leq k\leq t-1}\nu_{t,k}+\sum_{1\leq k\leq 2m-j}\omega_{t,k},\]
		with $1\leq t = j-i\leq n-1,$
		$\mu_{t,k}= \psi_{k}\psi_{t+k}^{(\kappa)}-p_{k}p_{t+k}^{(\kappa)}$, $\nu_{t,k}=  p_{k}^{(\kappa)}\psi_{t-k}^{(\kappa)}-\psi_k^{(\kappa)}p_{t-k}^{(\kappa)}$, and $\omega_{t,k}=  p_{t+k}^{(\kappa)}p_{k}-\psi_{r+k}^{(\kappa)}\psi_{k}.$
		Since entries are belonging to $\HH$, then
		\begin{eqnarray*}
			u_{i,i+1}& =& \sum_{1\leq k\leq i-1}\mu_{1,k}+\sum_{1\leq k\leq 2m-i-1}\omega_{1,k}	\\
			&= &  \sum_{1\leq k\leq i}\mu_{1,k}+\sum_{1\leq k\leq 2m-i-2}\omega_{1,k}-	\mu_{1,i}+\omega_{1,2m-i-1}\\
			&= &   u_{i+1,i+2}-	\mu_{1,i}+\omega_{1,2m-i-1}.
		\end{eqnarray*}
		Then $	\mu_{1,i}-\omega_{1,2m-i-1}=0$, therefore
		\begin{equation}\label{1} \psi_{i}\psi_{i+1}^{(\kappa)}+\psi_{2m-i}^{(\kappa)}\psi_{2m-(i+1)}=p_{i}p_{1+i}^{(\kappa)}+p_{2m-i}^{(\kappa)}p_{2m-(i+1)},
		\end{equation}
		and
		\begin{eqnarray*}
			u_{i,i+2}& =& \sum_{1\leq k\leq i-1}\mu_{2,k}+\nu_{2,1}+\sum_{1\leq k\leq 2m-i-2}\omega_{2,k}	\\
			&= &  \sum_{1\leq k\leq i}\mu_{2,k}+\nu_{2,1}+\sum_{1\leq k\leq 2m-i-3}\omega_{2,k}-	\mu_{2,i}+\omega_{2,2m-i-2}\\
			&= &   u_{i+1,i+3}-	\mu_{2,i}+\omega_{2,2m-i-2}.
		\end{eqnarray*}
		Then $	\mu_{2,i}-\omega_{2,2m-i-2}=0$,  therefore
		\begin{equation}\label{2} \psi_{i}\psi_{i+2}^{(\kappa)}+\psi_{2m-i}^{(\kappa)}\psi_{2m-(i+2)}=p_{i}p_{2+i}^{(\kappa)}+p_{2m-i}^{(\kappa)}p_{2m-(i+2)}.
		\end{equation}
		By performing similar computations for $u_{i,i+t}$, $t=3,4, \cdots, 2m-i$, we arrive at 
		\begin{equation}\label{3} \psi_{i}\psi_{i+t}^{(\kappa)}+\psi_{2m-i}^{(\kappa)}\psi_{2m-(i+t)}=p_{i}p_{i+t}^{(\kappa)}+p_{2m-i}^{(\kappa)}p_{2m-(i+t)}.
		\end{equation}
		Therefore, based on the equations from 
		(\ref{normal Td})---(\ref{3}), we can infer that if $T^{\dagger \kappa}T-TT^{\dagger \kappa}=0$ then it follows for all $1\leq s,k\leq n-1,$
		\begin{equation}\label{fin}
			\psi_{s}\psi^{(\kappa)}_{k}+\psi^{(\kappa)}_{n-s}\psi_{n-k}=p_s p_k^{(\kappa)}+p_{n-s}^{(\kappa)}p_{n-k}.
		\end{equation}
		For the establishment of converse, let us assume that the condition in equation (\ref{condition}) is true for every pair of indices  $1\leq s,k\leq n-1,$. Our goal is to show that $u_{r,r} = 0$ whenever $1\leq r\leq n $  and $u_{i,j }= 0$ whenever $1\leq i<j\leq n$ respectively. From the argument presented in the first part of the proof, we have established that $u_{r,r} = u_{r+1,r+1} $, for each $r = 1, 2,\cdots, m,$ and that  $u_{r,r}=-u_{2m-r+1,2m-r+1}$ holds for all values of $r$, Additionally, since $u_{m,m}=-q_m=\psi_{m}\psi^{(\kappa)}_{m}-p_{m}^{(\kappa)}p_{m}=0$, it follows that $u_{r,r} = 0$, for every $r = 1, 2,\cdots, 2m$. Thus, the diagonal entries $u_{r,r}$ are all vanish for all $r$.\\\
		For the case where $i < j$, we observe that
		$u_{i,i+1}= u_{i+1,i+2}$, $u_{i,i+2} = u_{i+1,i+3},\cdots ,u_{i,i+t} = u_{i+1,i+t+1}$. These equalities show that $U \in \TT_n[\HH]$. Since entries of $U$ are in $\HH$, we can express $u_{m,m+1}$ as
		$$u_{m,m+1} =\sum_{1\leq k\leq m-1}\mu_{1,k}+\sum_{1\leq k\leq 2m-m-1}\omega_{1,k}=0. $$
		As a result, it follows that $u_{i,i+1} =0$ for all values of $i$.  Furthermore, since
		\begin{eqnarray*}
			u_{m,m+2} = \sum_{1\leq k\leq m-1}\mu_{2,k}+\nu_{2,1}+\sum_{1\leq k\leq 2m-m-2}\omega_{2,k}=0.\\
		\end{eqnarray*}
		Consequently $u_{i,i+2} =0$, for all $i$. 	
		For all $t = 1, 2,\cdots, 2m-i$, we have $u_{i,i+t} = 0,$ A similar
		approach applies for $n =2m + 1$. Therefore, the proof is concluded. 
	\end{proof}

	\section{Acknowledgments}

	%%%%%%%%%%%%%%%%%%%%%%%%%%%%%%%%%5


\begin{thebibliography}{hh}
		
	
	\bibitem{Gu2003} C. Gu, \& Patton (2003). Commutation relation for Toeplitz and Hankel matrices. SIAM J. Matrix Anal. Appl., 24, 728-746.
	
	\bibitem{Zhang1997} F. Zhang (1997). Quaternions and Matrices of Quaternions. Linear algebra and its applications, 251, 21-57.
	

\bibitem{Ye2016} K. Ye, \& L.H. Lim (2016). Every Matrix is a Product of Toeplitz Matrices. Found Comput Math., 16, 577-598.


\bibitem{Heinig2001} G. Heinig (2001). Not every matrix is similar to a Toeplitz matrix. Linear Algebra and its applications., 96, 519-531.

\bibitem{Mackey1999} 
D. S. Mackey, \& S. Petrovic (1999). Is every matrix similar to a Toeplitz matrix?. Linear Algebra and its applications., 297, 87-105.
\bibitem{Williams2020} H. Williams (2020). Superpositions of unitary operators in quantum mechanics. IOPSciNotes, 1, 035204.

\bibitem{Hamilton1969} W. R. Hamilton (1969). Elements of Quaternions. Chelsea Pub. Com..


\bibitem{Hamilton1953} W. R. Hamilton (1953). Lectures on Quaternions. Hodges and Smith.

\bibitem{kleyn2014} Aleks Kleyn. (2014). Lectures on Linear Algebra over Division Ring.


\bibitem{Grenander1958} U. Grenander, \& G. Szego (1958). Toeplitz Forms and Their Applications. University of Calif. Press.

\bibitem{Segre1892} C. Segre (1892). The Real Representations of Complex Elements and Extension to Bicomplex, Systems. Math. Ann., 40, 413-467.

\bibitem{Kosal2014} H. Kosal, \& M. Tosun (2014). Commutative quaternion matrices. Cliiford Algebra, 24, 769-779.

\bibitem{Kosal2015} Kosal, H., Akyigit, M., \& Tosun, M. (2015). Consimilarity of commutative quaternion matrices. Miskolc Math. Notes, 16, 965-977. 
\bibitem{Macklin1984} P. A. Macklin (1984). Normal matrices for physicists. Am. J. Phys., 52, 513-515.

\bibitem{Visick2000} G. Visick (2000). A Quantitative Version of Observation That The Hadamard Product is  a Principal Submatrix of The Kronecker Product.. Linear Algebra and its Applications, 304, 45-68.


\bibitem{Moenck1977OnCC} Robert T. Moenck (1977). On computing closed forms for summations. [polynomials and rational functions].

\bibitem{Mathias1990} R. Mathias (1990). The Spectral Norm of a Nonnegative Matrix. Linear Algebra and its Applications, 131, 269-284.

\bibitem{Altun2021} D. Altun, \& S. Yuce (2021). Algebraic structure and basics of analysis of $n$-dimensional quaternionic space. Heliyon, 7, 1-6.


\bibitem{Gongopadhyay2012} K. Gongopadhyay. (2012). Algebraic Characterization of Isometries of the Hyperbolic 4-Space.

\bibitem{Nikolski2020} N. Nikolski (2020). Toeplitz matrices and operators. Cambridge university press.
\bibitem{Widom1965} H. Widom (1965). Toeplitz Matrices, in Studies in Real and Complex Analysis, , MAA Studies in Mathematics. (J. I. I. Hirschmann, ed.).


\bibitem{Shalom1987} T. Shalom (1987). On algebras of Toeplitz matrices. Linear Algebra and its applications, 24, 211-226.

\bibitem{brown1964} Brown, L., \& Halmos, P. (1964). Algebraic properties of Toeplitz operators. Journal fur die Reine und Angewandte Mathematik, 213, 89–102.


\bibitem{Matej2022} Matej Bresar, \& V. Shulman (2022). On around and beyond Frobenius theorem on division algebra. Linear and multilinear algebra, 1369-1381.

\bibitem{Khan2018} M. A. Khan (2018). A family of maximal algebra of block Toeplitz matrices . Ovidius constanta seria Mathematica, 26, 127-142.



\bibitem{Khan2023} M. A. Khan (2023). Block Toeplitz matrices: multiplicative properties. MATEMATIKA, 24, 101-112.

\bibitem{Khan2021} M. A. Khan, \& D. Timotin (2021). Algebras of block Toeplitz matrices with commuting entries. Linear and Multilinear Algebra, 69, 2702-2716.

\bibitem{Khan20221} M. A. Khan (2022). Product of matrix valued truncated Toplitz 0perators, . Hacet.J. Math. Stat , 51, 700-711.

\bibitem{Khan2022} M. A. Khan, \& A. Yagoub (2022). On some algebraic properties of block Toeplitz matrices with commuting, entries. Operators and Matrices , 16, 909-923.

\bibitem{yagoub2024algebrastoeplitzmatricesquaternion} M. A. Khan, \& A. Yagoub (2024). Algebras of Toeplitz Matrices with Quaternion Entries. Journal of Mathematical Extension , 18, 1-19.

\bibitem{Catoni2005} Catoni, F., Cannata, R., \& Zampetti, P. (2005). An Introduction to Commutative Quaternions. Adv. Appl. Clifford Algebras , 16, 1-28.

\bibitem{Catoni12005} Catoni, F., Cannata, R., Nichelatti, \& Zampetti, P. (2005). Hypercomplex Numbers and Functions of Hypercomplex Variables: a Matrix Study. Adv. Appl. Clifford Algebras , 16, 183-213.
\bibitem{alpay} D. Alpay, F. Colombo, \& I. Sabadini, (2020) Quaternionic de Branges Spaces and Characteristic Operator Function, Springer Cham Switzerland. 
\bibitem{rodman} L. Rodman, Topics in Quaternion Linear Algebra , Princeton University Press, 2014

	\end{thebibliography}
\end{document}